\newtheorem{theorem}{Theorem}[section]
\newtheorem{lemma}[theorem]{Lemma}
\newtheorem{corollary}[theorem]{Corollary}
\theoremstyle{definition}
\newtheorem{definition}[theorem]{Definition}
\begin{document}
\begin{frontmatter}
\title{The (1,2)-step competition graph of a hypertournament\footnote{Corresponding author. $E$-$mail\ \ address:$ ruijuanli@sxu.edu.cn(R. Li). Research of RL is partially supported by NNSFC under no. 11401353 and NSF of Shanxi Province under no. 2016011005. Research of XZ is partially supported by NNSFC under no. 61402317.}}
\author[1]{Ruijuan Li}
\author[1]{Xiaoting An}
\author[2]{Xinhong Zhang}
\address[1]{School of Mathematical Sciences, Shanxi University, Taiyuan, Shanxi, 030006, PR China}
\address[2]{Department of Applied Mathematics, Taiyuan University of Science and Technology, 030024, PR China}

\begin{abstract}
Competition graphs were created in connected to a biological model as a means of reflecting the competition relations among the predators in the food webs and determining the smallest dimension of ecological phase space. In 2011, Factor and Merz introduced the (1,2)-step competition graph of a digraph. Given a digraph $D=(V,A)$, the (1,2)-step competition graph of $D$, denoted $C_{1,2}(D)$, is a graph on $V(D)$ where $xy\in E(C_{1,2}(D))$ if and only if there exists a vertex $z\neq x,y$ such that either $d_{D-y}(x,z)=1$ and $d_{D-x}(y,z)\leq 2$ or $d_{D-x}(y,z)=1$ and $d_{D-y}(x,z)\leq 2$. They also characterized the (1,2)-step competition graphs of tournaments and extended some results to the $(i,j)$-step competition graphs of tournaments. In this paper, the definition of the (1,2)-step competition graph of a digraph is generalized to the one of a hypertournament and the $(1,2)$-step competition graph of a $k$-hypertournament is characterized. Also, the results are extended to the $(i,j)$-step competition graph of a $k$-hypertournament.
\end{abstract}

\begin{keyword}
$k$-hypertournament; competition graph; (1,2)-step competition graph; $(i,j)$-step competition graph
\end{keyword}

\end{frontmatter}

\section{Terminology and introduction}

Let $G=(V,E)$ be an undirected graph, or a graph for short. $V(G)$ and $E(G)$ are the vertex set and edge set of $G$, respectively. The complement $G^{c}$ of a graph $G$ is the graph with vertex set $V(G)$ in which two vertices are adjacent if and only if they are not adjacent in $G$. Let $G_{1}$ and $G_{2}$ be two graphs. The union of $G_{1}$ and $G_{2}$, denoted by $G_{1}\cup G_{2}$, is the graph with vertex set $V(G_{1})\cup V(G_{2})$ and edge set $E(G_{1})\cup E(G_{2})$.

Let $D=(V,A)$ be a directed graph, or a digraph for short. $V(D)$ and $A(D)$ are the vertex set and arc set of $D$, respectively. Let $i\geq 1$, $j\geq 1$. The $(i,j)$-step competition graph of $D$, denoted $C_{i,j}(D)$, is a graph on $V(D)$ where $xy\in E(C_{i,j}(D))$ if and only if there exists a vertex $z\neq x,y$ such that either $d_{D-y}(x,z)\leq i$ and $d_{D-x}(y,z)\leq j$ or $d_{D-x}(y,z)\leq i$ and $d_{D-y}(x,z)\leq j$. When $(i,j)=(1,1)$, it is also called the competition graph of $D$ and write $C_{1,2}(D)$ as $C(D)$ for short.

Competition graphs of digraphs were created in connected to a biological model, first introduced by Cohen \cite{cohen} as a means of reflecting the competition relations among the predators in the food webs and determining the smallest dimension of ecological phase space. An ecological food web is modeled by a digraph $D$. The species of the ecosystem is denoted by the vertices of $D$. There is an arc from a vertex $x$ to $y$ if $x$ preys on $y$. The competition graph of this ecological food web has the same vertex set as $D$. $xy$ is an edge of $C(D)$ if there is a vertex $z$ such that $z$ is the common prey of $x$ and $y$. Now competition graphs have been applied widely to the coding, channel assignment in communications, modeling of complex systems arising from study of energy and economic system, etc. A comprehensive introduction to competition graphs can be found in \cite{roberts,dutton,lundgren,guichard}.

In 1991, Hefner et al.\cite{hefner} defined the $(i,j)$ competition graph. In 1998, Fisher et al. \cite{fisher} showed the relation of the competition graph and the domination graph of a tournament. Recall that a tournament is an oriented complete graph. In 2008, Hedetniemi et al. \cite{hedetniemi} introduced $(1,2)$ domination. This was followed by Factor and Langley's introduction of the $(1,2)$-domination graph \cite{factor2}. Furthermore, in 2016, Factor and Langley \cite{factor3} studied the problem of  kings and heirs and gave a characterization of the $(2,2)$-domination graphs of tournaments. In recent years, many researcher investigated $m$-step competition graphs of some special digraphs and the competition numbers of some graphs etc. See \cite{helleloid,kim1,zhao,park,kim2}. Because of the similarities to the construction of \cite{factor2, cho}, in 2011, Factor and Merz \cite{factor1} gave the definition of the $(i,j)$-step competition graph of a digraph. They also characterized the $(1,2)$-step competition graph of a tournament and extended some results to the $(i,j)$-step competition graph of a tournament. They proved the following theorems related to this paper.

\begin{theorem}\cite{factor1}\label{factor11} A graph $G$ on $n\geq 5$ vertices is the $(1,2)$-step competition graph of some strong tournament if and only if $G$ is $K_{n}$, $K_{n}-E(P_{2})$, or $K_{n}-E(P_{3})$.\end{theorem}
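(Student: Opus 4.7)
The plan is to first prove a structural lemma characterizing the non-edges of $C_{1,2}(T)$ for a strong tournament $T$, then use the lemma to bound their number and configuration, and finally exhibit constructions. The key lemma asserts: in a strong tournament $T$, a pair $\{x,y\}$ is a non-edge of $C_{1,2}(T)$ if and only if, up to swapping $x$ and $y$, one has $x\to y$ in $T$ and $N_T^+(x)=\{y\}$. The "if" direction is immediate, since no $z\ne y$ can satisfy $d_{T-y}(x,z)\le 2$ when $y$ is the only out-neighbor of $x$. For "only if", assume $x\to y$ and $\{x,y\}\notin E(C_{1,2}(T))$. First, $x$ and $y$ share no common out-neighbor, since such a vertex would immediately witness an edge. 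Next, for each $z\in N^+(x)\setminus\{y\}$ the non-edge condition forces $N^+(y)\subseteq N^+(z)$, and for each $w\in N^+(y)$ it forces $N^+(x)\setminus\{y\}\subseteq N^+(w)$; if both sets were non-empty, a pair $(z,w)$ would yield the contradictory arcs $z\to w$ and $w\to z$. Strong connectivity ensures $N^+(y)\ne\emptyset$, forcing $N^+(x)\setminus\{y\}=\emptyset$, that is, $N_T^+(x)=\{y\}$.

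Setting $S=\{v\in V(T):d_T^+(v)=1\}$ and letting $f(v)$ denote the unique out-neighbor, the lemma puts the non-edges of $C_{1,2}(T)$ in bijection with $S$ via $v\mapsto\{v,f(v)\}$. I would next show $|S|\le 2$ using $n\ge 5$: if three distinct vertices $x_1,x_2,x_3\in S$ existed, a short analysis of the three arcs among them forces the $3$-cycle $x_1\to x_2\to x_3\to x_1$ with $N^+(x_i)=\{x_{i+1}\}$ (indices modulo $3$); then $\{x_1,x_2,x_3\}$ sends no arcs to the remaining $n-3\ge 2$ vertices, contradicting strongness. If $|S|=2$ with $x_1\to x_2$, necessarily $f(x_1)=x_2$ and $f(x_2)\notin\{x_1,x_2\}$, so the two non-edges $\{x_1,x_2\}$ and $\{x_2,f(x_2)\}$ share $x_2$ and form the edge set of a $P_3$. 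Thus $|S|=0,1,2$ give $C_{1,2}(T)=K_n$, $K_n-E(P_2)$, $K_n-E(P_3)$ respectively, completing necessity.

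For sufficiency, I would exhibit a strong tournament realizing each graph. For $K_n$, any strong tournament on $n\ge 5$ vertices with minimum out-degree at least $2$ works (such tournaments exist, for instance the rotational ones), and then $S=\emptyset$. For $K_n-E(P_2)$, take $V=\{1,\ldots,n\}$ with $N^+(1)=\{2\}$, orient $2\to j$ for all $j\ge 3$, and place on $\{3,\ldots,n\}$ a sub-tournament that gives every such vertex out-degree $\ge 2$ in $T$; the Hamilton cycle $1\to 2\to 3\to\cdots\to n\to 1$ guarantees strongness, and $S=\{1\}$. For $K_n-E(P_3)$, use the analogous construction with $N^+(1)=\{2\}$ and $N^+(2)=\{3\}$. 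The principal obstacle is the necessity step ruling out the simultaneous non-emptiness of $N^+(x)\cap N^-(y)$ and $N^-(x)\cap N^+(y)$: the asymmetric $(1,2)$-step condition must be translated into symmetric domination statements on each side that then collide via the tournament axiom. Secondarily, the sufficiency constructions require care to produce no accidental vertex of out-degree $1$, uniformly in $n\ge 5$.
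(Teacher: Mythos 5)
Your proof is correct. Note that the paper does not actually prove this statement --- it is quoted from Factor and Merz --- but it does prove the hypertournament analogue (Theorem \ref{strong}), and your argument is a clean, self-contained proof of the $k=2$ case that runs parallel to that machinery while being genuinely leaner. Your key lemma (a pair $\{x,y\}$ is a non-edge iff one endpoint has the other as its unique out-neighbour) is exactly what Corollary \ref{both} collapses to when $k=2$, since case $(c)$ there is vacuous for ordinary tournaments; your derivation of it, translating the failure of $(1,2)$-step competition into the two inclusions $N^+(y)\subseteq N^+(z)$ and $N^+(x)\setminus\{y\}\subseteq N^+(w)$ and colliding them via the tournament axiom, is sound, and strongness is used exactly where needed ($N^+(y)\neq\emptyset$). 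Where you diverge from the paper's route is in bounding the number of non-edges: the paper rules out $2K_2$, $C_3$ and $K_{1,3}$ in the complement (Lemmas \ref{one}, \ref{two} and Corollary \ref{four}) and then assembles these into ``at most two edges forming a path,'' whereas you put the non-edges in bijection with the set $S$ of out-degree-one vertices and kill $|S|\geq 3$ by a direct degree count inside the triple plus strongness; this buys a shorter argument and automatically yields that two non-edges must share a vertex ($f(x_1)=x_2$ or $f(x_2)=x_1$), which the paper obtains from the $2K_2$ exclusion. Your sufficiency constructions are correct in substance; the only places deserving an extra line are the existence, for every $n\geq 5$ (including even $n$, where regular ``rotational'' tournaments do not exist), of a strong tournament with minimum out-degree at least $2$, and the verification that the subtournament on $\{3,\dots,n\}$ can be chosen (e.g.\ strong) so that no unintended vertex of out-degree $1$ appears --- both are routine and you flag them yourself.
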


\begin{theorem}\cite{factor1}\label{factor12} $G$, a graph on $n$ vertices, is the $(1,2)$-step competition graph of some tournament if and only if $G$ is one of the following graphs:

1. $K_{n}$, where $n\neq 2,3,4$;

2. $K_{n-1}\cup K_{1}$, where $n>1$;

3. $K_{n}-E(P_{3})$, where $n>2$;

4. $K_{n}-E(P_{2})$, where $n\neq 1,4$, or

5. $K_{n}-E(K_{3})$, where $n\geq 3$.
\end{theorem}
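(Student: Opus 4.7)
The plan is to prove both directions, using the strong component decomposition of a tournament as the main organizing tool. For the sufficiency direction I would exhibit explicit tournaments realizing each of the five graphs. Cases (1), (3) and (4) for $n\geq 5$ are already covered by Theorem \ref{factor11} applied to strong tournaments, so I only need to handle the remaining small values of $n$ by direct construction. For case (2), $K_{n-1}\cup K_1$, the natural idea is to take a tournament on $n-1$ vertices realizing $K_{n-1}$ and append a new vertex either as a universal sink (giving it no prey at all) or as a universal source with controlled out-structure, so that it is isolated in $C_{1,2}(T)$. For case (5), $K_n-E(K_3)$, a non-strong construction whose sink component is a 3-cycle (or a transitive triple with suitable predecessors) should produce exactly the three missing edges of a triangle.

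For the necessity direction, the plan is to split on whether $T$ is strongly connected. If $T$ is strong and $n\geq 5$, Theorem \ref{factor11} directly yields (1), (3) or (4); the strong cases $n\leq 4$ are finite and can be enumerated. If $T$ is not strong, let $S_1,\ldots,S_k$ be the strong components of $T$ in acyclic order, so that every arc between $S_i$ and $S_j$ with $i<j$ points from $S_i$ to $S_j$. Two key observations drive the argument: any two vertices lying in components other than the sink $S_k$ easily have a common out-neighbor reachable by a 1-step/2-step pair through later components, hence are adjacent in $C_{1,2}(T)$; and the sink $S_k$ is the only place where ``missing edges'' of $C_{1,2}(T)$ can concentrate, because vertices of $S_k$ have no out-neighbors outside $S_k$.

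From these observations I would argue that the complement $C_{1,2}(T)^c$ is supported essentially on $S_k$ together with at most one extra vertex, and then do a case analysis on $|S_k|$: $|S_k|=1$ forces case (2); $|S_k|=2$ produces case (4); $|S_k|=3$ gives case (3) or case (5) depending on whether $S_k$ is a 3-cycle or degenerates with special interaction from $S_{k-1}$; and $|S_k|\geq 4$ either reduces the analysis to the strong case inside $S_k$ via Theorem \ref{factor11} or forces $C_{1,2}(T)=K_n$. The main obstacle will be the last step: verifying that no exotic missing-edge pattern can arise, which requires carefully tracking when a distance $d_{T-y}(x,z)$ can be pushed above $2$ inside a small sink component, and ruling out configurations where non-adjacency in $C_{1,2}(T)$ spreads outside the expected $P_2$, $P_3$ or $K_3$ pattern.
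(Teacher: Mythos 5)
This statement is quoted from Factor and Merz \cite{factor1}; the present paper gives no proof of it, so there is no in-paper argument to measure yours against. The closest internal analogue is the hypertournament treatment in Sections 2--5, which is organized quite differently from your sketch: rather than decomposing $T$ into strong components, the paper first characterizes the non-edges of $C_{1,2}(T)$ purely in terms of out-neighbourhoods (Lemma \ref{bothside}) and then shows the complement of $C_{1,2}(T)$ can contain neither two disjoint edges, nor a triangle, nor a $K_{1,3}$ (Lemmas \ref{one}--\ref{three}), from which the short list of admissible graphs falls out. The tournament analogue of Lemma \ref{bothside} --- $xy\notin E(C_{1,2}(T))$ if and only if $N^{+}(x)\subseteq\{y\}$ or $N^{+}(y)\subseteq\{x\}$ --- is precisely the ingredient your proposal lacks. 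You defer ``verifying that no exotic missing-edge pattern can arise'' to the end and call it the main obstacle, but it is the heart of the proof, not a technicality: once that lemma is available, every non-edge of $C_{1,2}(T)$ is pinned to a vertex of out-degree at most one, the distance-tracking you worry about disappears entirely, and both your sink-component analysis and the paper's forbidden-subgraph analysis become routine. Without it the necessity direction is not a proof.

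Several of your case assignments are also wrong as stated, though they do not destroy the architecture. A strong component of a tournament never has exactly two vertices, so the case $|S_k|=2$ is vacuous; $K_n-E(P_2)$ in fact arises either from a strong $T$ via Theorem \ref{factor11} or from a non-strong $T$ whose sink component has at least five vertices and itself realizes $K_m-E(P_2)$. Likewise $|S_k|=3$ forces $S_k$ to be a $3$-cycle, each of whose vertices then has out-degree exactly one, so that case always yields $K_n-E(K_3)$ and never case (3). The case $|S_k|=4$ is not covered by Theorem \ref{factor11} (which needs at least five vertices) and must be checked by hand: the unique strong tournament on four vertices has two vertices of out-degree one sharing a neighbour, giving $K_n-E(P_3)$. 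The ``universal source'' variant of your construction for $K_{n-1}\cup K_1$ does not work, since a source of out-degree $n-1\geq 2$ competes with every other vertex; only the sink version does. Finally, be warned that your promised enumeration of small cases will expose friction with the statement itself: on three vertices only the transitive and cyclic tournaments exist, producing $K_2\cup K_1$ and the empty graph respectively, so no tournament realizes the two-edge graph $K_3-E(P_2)$ even though case 4 as transcribed permits $n=3$.
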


\begin{theorem}\cite{factor1}\label{factor13} If $T$ is a tournament with $n$ vertices, $i\geq 1$ and $j\geq 2$, then $C_{i,j}(T)=C_{1,2}(T)$.\end{theorem}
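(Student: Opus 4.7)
The plan is to prove both containments. The inclusion $C_{1,2}(T)\subseteq C_{i,j}(T)$ is immediate from the definitions: any witness $z$ for an edge in $C_{1,2}(T)$ satisfies the weaker $(i,j)$ distance bounds as soon as $i\geq 1$ and $j\geq 2$. So the real work lies in the reverse inclusion $C_{i,j}(T)\subseteq C_{1,2}(T)$, where we start with an edge $xy\in E(C_{i,j}(T))$ witnessed by some potentially distant vertex $z$ and must produce a witness satisfying the stricter $(1,2)$ bounds.

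For that direction, fix $xy\in E(C_{i,j}(T))$. By the symmetry of the definition in $x$ and $y$, we may assume there is $z\neq x,y$ with $d_{T-y}(x,z)\leq i$ and $d_{T-x}(y,z)\leq j$. Introduce the out-neighborhoods $A=N^{+}_{T-y}(x)$ and $S=N^{+}_{T-x}(y)$; both are nonempty, since the paths from $x$ to $z$ in $T-y$ and from $y$ to $z$ in $T-x$ must each take a first step. If $A\cap S\neq\emptyset$, any common vertex is already a $(1,1)$-witness for $xy\in E(C_{1,2}(T))$, so we may further assume $A\cap S=\emptyset$.

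Now argue by contradiction: suppose $xy\notin E(C_{1,2}(T))$. Then no $w\in A$ can serve as a $(1,2)$-witness from the $x$ side, which forces $d_{T-x}(y,w)\geq 3$ for every $w\in A$; symmetrically $d_{T-y}(x,v)\geq 3$ for every $v\in S$. Since every vertex of $A\cup S$ differs from both $x$ and $y$, arcs in $T-x$ and in $T-y$ among such vertices coincide with the arcs in $T$. Unpacking $d_{T-x}(y,w)\geq 3$ therefore says no element of $S$ dominates $w$, forcing $w\to v$ in $T$ for every $v\in S$; symmetrically $v\to w$ in $T$ for every $w\in A$. Picking any $w\in A$ and $v\in S$ (distinct because $A\cap S=\emptyset$) yields both arcs $w\to v$ and $v\to w$, contradicting the tournament axiom.

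I expect the only real obstacle to be bookkeeping: keeping track of precisely when the restricted distances in $T-x$ and $T-y$ agree with adjacencies in $T$ itself, so that the two distance-at-least-$3$ hypotheses translate cleanly into the opposing arc orientations that drive the contradiction. Everything else is a direct unpacking of definitions.
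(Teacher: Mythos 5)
Your proof is correct. One point of context first: the paper does not actually prove this statement --- it is quoted from Factor and Merz as background --- so the only in-paper comparison is with the analogous theorem for $k$-hypertournaments in Section~6. There the argument is indirect: the authors first establish a complete characterization of the non-edges of $C_{1,2}(T)$ (Lemma~\ref{bothside}), observe that the identical characterization describes the non-edges of $C_{i,j}(T)$ for $i\geq 1$, $j\geq 2$ (Corollary~\ref{ikstep}), and conclude equality of the two graphs by matching the two lists of forbidden configurations. Your route is genuinely different and more elementary: you never classify non-edges, but instead take an arbitrary $(i,j)$-witness, extract only the nonempty first-step sets $A=N^{+}_{T-y}(x)$ and $S=N^{+}_{T-x}(y)$, and show that the failure of every vertex of $A\cup S$ to be a $(1,2)$-witness forces two opposite arcs between some $w\in A$ and $v\in S$, contradicting completeness of the tournament. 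All the details check out: $A$ and $S$ are nonempty because $z\neq x,y$ forces each witnessing path to have length at least one; every vertex of $A\cup S$ is distinct from $x$ and $y$, so it is an admissible witness and the arcs among such vertices in $T-x$ and $T-y$ agree with those in $T$; and $A\cap S=\emptyset$ guarantees $w\neq v$. What your approach buys is a short, self-contained proof exploiting the tournament axiom directly; what the paper's approach buys is reusability --- the non-edge characterization does the work for the characterization theorems in Sections~4 and~5 as well, and transfers to the hypertournament setting where ``exactly one arc between two vertices'' fails and your final contradiction would need to be replaced by an argument about last entries of arcs.
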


To model more general ecological food webs, the (1,2)-step competition graphs of pure local tournaments are characterized by Zhang and Li \cite{zhang} in 2016. In this paper, we study the (1,2)-step competition graph of a $k$-hypertournament and extend Theorem \ref{factor11}, \ref{factor12}, \ref{factor13} to a $k$-hypertournament.

Given two integers $n$ and $k$, $n\geq k>1$, a $k$-hypertournament on $n$ vertices is a pair $(V,A)$, where $V$ is a set of vertices, $|V|=n$ and $A$ is a set of $k$-tuples of vertices, called arcs, so that for any $k$-subset $S$ of $V$, $A$ contains exactly one of the $k!$ $k$-tuples whose entries belong to $S$. As usual, we use $V(T)$ and $A(T)$ to denote the vertex set and the arc set of $T$, respectively. Clearly, a 2-hypertournament is merely a tournament. When $k=n$, the hypertournament has only one arc and it does not have much significance to study. Thus, in what follows, we consider $3\leq k\leq n-1$.

Let $T=(V,A)$ be a $k$-hypertournament on $n$ vertices. For an arc $a$ of $T$, $T-a$ denotes a hyperdigraph obtained from $T$ by removing the arc $a$ and $\bar{a}$ denotes the set of vertices contained in $a$. If $v_{i},v_{j}\in \bar{a}$ and $v_{i}$ precedes $v_{j}$ in $a$, we say that $v_{i}$ dominates $v_{j}$ in $a$. We also say the vertex $v_{j}$ is an out-neighbour of $v_{i}$ and use the following notation: $$N_{T}^{+}(v_{i})=\{v_{j}\in V\setminus \{v_{i}\}: v_{i} \,\,\mbox{precedes}\,\, v_{j} \,\mbox{in an arc}\, a\in A(T)\}.$$
We will omit the subscript $T$ if the $k$-hypertournament is known from the context.

A path $P$ in a $k$-hypertournament $T$ is a sequence $v_{1}a_{1}v_{2}a_{2}v_{3}\cdots v_{t-1}a_{t-1}v_{t}$ of distinct vertices $v_{1},v_{2},\cdots,v_{t}$, $t\geq1$, and distinct arcs $a_{1},a_{2},\cdots,a_{t-1}$ such that $v_{i}$ precedes $v_{i+1}$ in $a_{i}$, $1\leq i\leq t-1$. Meanwhile, let the vertex set $V(P)=\{v_{1},v_{2},\cdots,v_{t}\}$ and the arc set $A(P)=\{a_{1},a_{2},\cdots,a_{t-1}\}$. The length of a path $P$ is the number of its arcs, denoted $l(P)$. A path from $x$ to $y$ is an $(x,y)$-path. The $k$-hypertournament $T$ is called strong if $T$ has an $(x,y)$-path for every pair $x,y$ of distinct vertices in $T$.

A $k$-hypertournament $T$ is said to be transitive if its vertices are labeled $v_{1},v_{2},\cdots,v_{n}$ in such an order so that $i<j$ if and only if $v_{i}$ precedes $v_{j}$ in each arc containing $v_{i}$ and $v_{j}$.

Now we generalize the $(1,2)$-step competition graph of a digraph to the one of a $k$-hypertournament.

\begin{definition}\label{1} The {\it $(i,j)$-step competition graph} of a $k$-hypertournament $T$ with $i\geq 1$ and $j\geq 1$, denoted $C_{i,j}(T)$, is a graph on $V(T)$ where $xy\in E(C_{i,j}(T))$ if and only if there exist a vertex $z\neq x,y$ and an $(x,z)$-path $P$ and a $(y,z)$-path $Q$ satisfying the following:

    (a) $y\notin V(P)$, $x\notin V(Q)$;

    (b) $l(P)\leq i$ and $l(Q)\leq j$, or $l(Q)\leq i$ and $l(P)\leq j$;

    (c) $P$ and $Q$ are arc-disjoint.
\end{definition}

If $xy\in E(C_{i,j}(T))$, we say that $x$ and $y$ $(i,j)$-step compete. In particular, we say that $x$ and $y$ compete if $l(P)=1$ and $l(Q)=1$ in $(b)$. $C_{1,1}(T)$ is also called the competition graph of the $k$-hypertournament $T$. Clearly, when $k=2$, $T$ is a tournament and $C_{i,j}(T)$ is the $(i,j)$-step competition graph of $T$.

$k$-hypertournaments form one of the most interesting class of digraphs. For the class of $k$-hypertournaments, the popular topics are the Hamiltonicity and vertex-pancyclicity. See \cite{gutin,petrovicl,yang,li}. Besides, some researchers investigated the degree sequences and score sequences of $k$-hypertournaments. See\cite{zhou,wang}. Recently, the $H$-force set of a $k$-hypertournament was also studied. See \cite{rui}. Now we consider the $(1,2)$-step competition graphs of $k$-hypertournaments.

In Section $2$ and Section $3$, useful lemmas are provided in order to make the proof of the main results easier. In Section $4$ and Section $5$, the $(1,2)$-step competition graph of a (strong) $k$-hypertournament is characterized. In Section $6$, the main results are extended to the $(i,j)$-step competition graph of a $k$-hypertournament.

\section{The missing edges of $C_{1,2}(T)$}

For a pair of distinct vertices $x$ and $y$ in $T$, $A_{T}(x,y)$ denotes the set of all arcs of $T$ in which $x$ precedes $y$, $A_{T}\{x,y\}$ denotes the set of all arcs containing $x, y$ in $T$ and $A_{T}^{*}\{x,y\}$ denotes the set of all arcs containing $x, y$ in $T$ and in which neither $x$ nor $y$ is the last entry.

\begin{lemma}\label{bothside} Let $T$ be a $k$-hypertournament with $n$ vertices, where $3\leq k\leq n-1$. Then $xy\notin E(C_{1,2}(T))$ if and only if one of the following holds:

(a) $N^{+}(x)=\emptyset$;

(b) $N^{+}(y)=\emptyset$;

(c) $N^{+}(x)=\{y\}$;

(d) $N^{+}(y)=\{x\}$;

(e) $A_{T}^{*}\{x,y\}$ contains exactly an arc $a$, and $N_{T-a}^{+}(x)\subseteq \{y\}$, $N_{T-a}^{+}(y)\subseteq \{x\}$.\end{lemma}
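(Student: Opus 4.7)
The plan is to treat the two directions separately.

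\textbf{Sufficiency: each of (a)--(e) implies $xy\notin E(C_{1,2}(T))$.} For each case I aim to show no admissible witness $(z,P,Q)$ from Definition~\ref{1} can exist. Cases (a) and (b) are immediate, since any witness forces $l(P),l(Q)\geq 1$ (otherwise $z=x$ or $z=y$) and a vertex with empty out-neighbourhood starts no path of positive length. Cases (c) and (d) are similarly immediate: if $N^{+}(x)=\{y\}$, then any positive-length path from $x$ must use $y$ as its first successor, putting $y\in V(P)$ and violating condition (a) of Definition~\ref{1}. For (e) I split by the attempted length pair $(l(P),l(Q))\in\{(1,1),(1,2),(2,1)\}$; in every subcase the length-$1$ arc is either $a$ or not, and the hypothesis $N_{T-a}^{+}(x)\subseteq\{y\}$, $N_{T-a}^{+}(y)\subseteq\{x\}$ together with arc-disjointness and the side conditions $x\notin V(Q)$, $y\notin V(P)$ forces a contradiction with $z\notin\{x,y\}$. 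Concretely, any length-$1$ arc different from $a$ lands the endpoint $z$ inside $\{x,y\}$; using $a$ on both length-$1$ sides breaks arc-disjointness; and using $a$ as the length-$1$ side in a mixed-length witness forces the interior vertex of the length-$2$ side into $\{x,y\}$, again contradicting a side condition.

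\textbf{Necessity: $xy\notin E(C_{1,2}(T))$ implies one of (a)--(e).} I plan to argue the contrapositive, so I assume (a)--(e) all fail and construct a witness for $xy\in E(C_{1,2}(T))$. From the failure of (a)--(d), pick $u\in N^{+}(x)\setminus\{y\}$ via some arc $a_{1}$, and $v\in N^{+}(y)\setminus\{x\}$ via some arc $a_{2}$. The easy case is $u=v$ with $a_{1}\neq a_{2}$: the pair of length-$1$ paths $x\,a_{1}\,u$ and $y\,a_{2}\,u$ is a $(1,1)$-witness. Otherwise the arcs $a_{1},a_{2}$ must (in the sub-case $u=v$) coincide in a common arc, which then lies in $A_{T}^{*}\{x,y\}$; the failure of (e) then supplies either (i) a second arc in $A_{T}^{*}\{x,y\}$ allowing us to reroute to two arc-disjoint length-$1$ paths to a common target, or (ii) an out-neighbour of $x$ or $y$ strictly outside $\bar a\cup\{x,y\}$, which gives a fresh common target reachable by one length-$1$ and one length-$2$ arc-disjoint path built through the single shared arc $a$. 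The case $u\ne v$ is handled analogously by seeking a length-$2$ $(x,v)$- or $(y,u)$-path through an intermediate vertex obtained from the failure of (e).

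\textbf{Main obstacle.} A single hyperarc of size $k\geq 3$ encodes many out-neighbour relations at once, so merely counting $N^{+}(x)$ and $N^{+}(y)$ is insufficient---one must track which arc realises each relation in order to enforce the arc-disjointness of $P$ and $Q$. I expect the hardest subcase of the necessity direction to be $|A_{T}^{*}\{x,y\}|=1$ with only one of the two containment conditions in (e) failing, since there the single shared arc $a$ must be reused to build the length-$2$ leg of a $(1,2)$-step witness while still preserving $x\notin V(Q)$ or $y\notin V(P)$ and maintaining arc-disjointness with the length-$1$ leg.
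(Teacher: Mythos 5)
Your sufficiency argument is fine and matches the paper's: (a)--(d) are immediate, and in case (e) the exhaustion over the length pairs $(1,1)$, $(1,2)$, $(2,1)$ combined with arc-disjointness and the containments $N_{T-a}^{+}(x)\subseteq\{y\}$, $N_{T-a}^{+}(y)\subseteq\{x\}$ does kill every candidate witness.

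The necessity half has a genuine gap at its central step. In your case (i) you assert that a second arc in $A_{T}^{*}\{x,y\}$ lets you ``reroute to two arc-disjoint length-$1$ paths to a common target.'' That does not follow: if $a_{1},a_{2}\in A_{T}^{*}\{x,y\}$ have distinct last entries $w_{1}\neq w_{2}$, the only targets these two arcs guarantee are $w_{1}$ (for both $x$ and $y$, but only inside $a_{1}$) and $w_{2}$ (only inside $a_{2}$), and arc-disjointness forbids using one arc for both legs; already for $k=3$ with $a_{1}=(y,x,w_{1})$ and $a_{2}=(x,y,w_{2})$ these two arcs give no common target via two length-$1$ paths. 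This subcase is precisely where the paper does its real work: it chooses a third arc $a_{3}\in A_{T}\{w_{1},w_{2}\}\setminus\{a_{1},a_{2}\}$ and, say when $a_{3}\in A_{T}(w_{1},w_{2})$, takes the $(1,2)$-witness $P=xa_{1}w_{1}a_{3}w_{2}$, $Q=ya_{2}w_{2}$. The existence of $a_{3}$ is not free; it rests on $|A_{T}\{w_{1},w_{2}\}|=\binom{n-2}{k-2}$ together with a separate check of the extremal case $(n,k)=(4,3)$, where one must note that $a_{1}$ and $a_{2}$ cannot themselves lie in $A_{T}\{w_{1},w_{2}\}$. The same ``splice in a third arc joining the two distinct targets'' device, with the same existence question, is what your case (ii) and your $u\neq v$ case actually require (there the new out-neighbour need not lie outside $\bar{a}\cup\{x,y\}$ as you claim; it is merely reached by an arc other than $a$). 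Since your proposal never confronts the construction or the existence of this connecting arc, the necessity direction is not established.

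A further point you should be aware of: like the paper, you slide from ``$A_{T}^{*}\{x,y\}$ has at most one arc'' to ``exactly one arc'' and never treat $A_{T}^{*}\{x,y\}=\emptyset$. Your plan to handle $u\neq v$ ``analogously'' runs into real trouble there: for $(n,k)=(4,3)$ one can arrange $V=\{x,y,u,v\}$ with arcs $(u,x,y)$, $(v,y,x)$, $(v,x,u)$, $(u,y,v)$, so that none of (a)--(e) holds, yet every admissible pair of paths from $x$ and from $y$ to a common target is forced to share an arc, so $xy\notin E(C_{1,2}(T))$. So the empty case genuinely cannot be dismissed and needs explicit treatment (indeed it shows the characterization itself is delicate there); any complete proof must isolate it rather than fold it into the one-arc case.
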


\begin{proof} First, we show the ``if" part. Clearly, if one of $(a)-(d)$ holds, we have $xy\notin E(C_{1,2}(T))$. Now we assume that the argument $(e)$ holds. Since $A_{T}^{*}\{x,y\}$ contains exactly an arc $a$, and $N_{T-a}^{+}(x)\subseteq \{y\}$, $N_{T-a}^{+}(y)\subseteq \{x\}$, we have to use the unique arc $a$ to obtain the out-neighbour except $y$ of $x$ and the out-neighbour except $x$ of $y$. So $x$ and $y$ are impossible to (1,2)-step compete and hence $xy\notin E(C_{1,2}(T))$.

Now we show the ``only if" part. Assume that $xy\notin E(C_{1,2}(T))$. Also, assume that $x$ and $y$ do not satisfy $(a)-(d)$. That means  $N^{+}(x)\setminus \{y\}\neq\emptyset$, $N^{+}(y)\setminus \{x\}\neq\emptyset$. Now we show that $x$ and $y$ satisfy $(e)$. Suppose $A_{T}^{*}\{x,y\}$ consists of at least two arcs, say $a_{1},a_{2}\in A_{T}^{*}\{x,y\}$. Let $w_{i}$ be the last entry of $a_{i}$ for $i=1,2$. If $w_{1}=w_{2}$, then $x$ and $y$ compete, a contradiction. So assume $w_{1}\neq w_{2}$. Note that $\binom{n-2}{k-2}-2\leq |A_{T}\{w_{1},w_{2}\}\setminus \{a_{1},a_{2}\}|\leq \binom{n-2}{k-2}$. For $(n,k)\neq(4,3)$, we have $|A_{T}\{w_{1},w_{2}\}\setminus \{a_{1},a_{2}\}|\geq 1$. For $(n,k)=(4,3)$, since both $a_{1}$ and $a_{2}$ contain $x,y$ and $a_{1}\neq a_{2}$, we have $a_{1},a_{2}\notin A_{T}\{w_{1},w_{2}\}$ and hence $|A_{T}\{w_{1},w_{2}\}\setminus \{a_{1},a_{2}\}|=|A_{T}\{w_{1},w_{2}\}|=2$. Let $a_{3}\in A_{T}\{w_{1},w_{2}\}\setminus \{a_{1},a_{2}\}$. W.l.o.g., $a_{3}\in A_{T}(w_{1},w_{2})$. Then $P=xa_{1}w_{1}a_{3}w_{2}$ and $Q=ya_{2}w_{2}$ are the paths such that $x$ and $y$ (1,2)-step compete, a contradiction. So $A_{T}^{*}\{x,y\}$ consists of exactly an arc $a$. Suppose $N_{T-a}^{+}(x)\setminus \{y\}\neq\emptyset$. Then there exists an arc $b$ distinct from $a$ such that $x$ has an out-neighbour distinct from $y$. Similarly to the proof above, whether or not the last entries of $a$ and $b$ are same, we always have $xy\in E(C_{1,2}(T))$, a contradiction. So $N_{T-a}^{+}(x)\subseteq \{y\}$. Similarly, $N_{T-a}^{+}(y)\subseteq \{x\}$. Thus, $A_{T}^{*}\{x,y\}$ contains exactly an arc $a$, and $N_{T-a}^{+}(x)\subseteq \{y\}$, $N_{T-a}^{+}(y)\subseteq \{x\}$.

The lemma holds.
\end{proof}

By the proof of Lemma \ref{bothside}, we obtain the following result.

\begin{corollary}\label{both} Let $T$ be a strong $k$-hypertournament with $n$ vertices, where $3\leq k\leq n-1$. Then $xy\notin E(C_{1,2}(T))$ if and only if one of the following holds:

(a) $N^{+}(x)=\{y\}$;

(b) $N^{+}(y)=\{x\}$;

(c) $A_{T}^{*}\{x,y\}$ contains exactly an arc $a$, and $N_{T-a}^{+}(x)\subseteq \{y\}$, $N_{T-a}^{+}(y)\subseteq \{x\}$.\end{corollary}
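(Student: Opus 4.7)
The plan is to derive this corollary directly from Lemma~\ref{bothside} by using the strong-connectedness hypothesis to eliminate conditions (a) and (b) of that lemma. Notice that the three alternatives listed in Corollary~\ref{both} are literally the alternatives (c), (d), (e) of Lemma~\ref{bothside}; so once those first two alternatives are ruled out, there is nothing left to prove. In particular, the ``if'' direction is immediate: each of the three conditions in the corollary is one of the five conditions of Lemma~\ref{bothside}, and therefore already implies $xy\notin E(C_{1,2}(T))$ with no use of strongness.

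For the ``only if'' direction, I would begin with a pair $x,y$ satisfying $xy\notin E(C_{1,2}(T))$ and invoke Lemma~\ref{bothside} to obtain one of its five alternatives. The key step is the short observation that in a strong $k$-hypertournament every vertex has non-empty out-neighbourhood: given $v\in V(T)$, pick any other vertex $w$; by strongness there is a $(v,w)$-path $v a_1 v_2 a_2 v_3 \cdots$, and the definition of a path forces $v$ to precede $v_2$ in $a_1$, so $v_2\in N^{+}(v)$. Applying this to $v=x$ and $v=y$ shows that Lemma~\ref{bothside}(a) and Lemma~\ref{bothside}(b) cannot occur in a strong $T$, so the alternative produced by the lemma must be one of (c), (d), (e), which are precisely conditions (a), (b), (c) of the corollary.

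I do not expect any genuine obstacle: the corollary is essentially a specialization of Lemma~\ref{bothside}, and the only non-formal input is the elementary fact that strongness prevents empty out-neighbourhoods. The remark in the paper that the result is obtained ``by the proof of Lemma~\ref{bothside}'' is thus accurate in the strongest sense, namely that the statement of Lemma~\ref{bothside} is already enough and no re-examination of its proof is actually required.
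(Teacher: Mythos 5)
Your proposal is correct and matches the paper's (implicit) derivation: the corollary is exactly Lemma~\ref{bothside} with alternatives (a) and (b) removed, and strongness rules those out because every vertex of a strong $k$-hypertournament lies on a path to some other vertex and hence has a non-empty out-neighbourhood. The paper gives no written proof beyond citing Lemma~\ref{bothside}, and your argument supplies precisely the missing one-line observation.
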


\section{The forbidden subgraphs of $(C_{1,2}(T))^{c}$}

\begin{lemma}\label{one} Let $G$ on $n$ vertices be the $(1,2)$-step competition graph of some $k$-hypertournament $T$, where $3\leq k\leq n-1$. Then the complement $G^{c}$ of $G$ does not contain a pair of disjoint edges.\end{lemma}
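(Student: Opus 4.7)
The plan is to argue by contradiction: suppose $G^{c}$ contains two disjoint edges $xy$ and $uv$ on four pairwise distinct vertices, so that both $xy$ and $uv$ lie outside $E(C_{1,2}(T))$. Then Lemma \ref{bothside} applied to each non-edge assigns to $(x,y)$ one of the conditions (a)--(e), and similarly to $(u,v)$. My first step is to package these five conditions uniformly: each of (a)--(e) for $(x,y)$ produces a distinguished endpoint $p\in\{x,y\}$ (namely $p=x$ in (a), (c); $p=y$ in (b), (d); and either choice in (e)) together with a ``forbidden'' set $F_{xy}\subseteq A(T)$ of size at most one ($F_{xy}=\{a\}$ in case (e), $F_{xy}=\emptyset$ otherwise) such that $N_{T-F_{xy}}^{+}(p)\subseteq\{x,y\}$. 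The same packaging applies to $(u,v)$ with endpoint $q\in\{u,v\}$ and forbidden set $F_{uv}$.

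The core of the argument is then straightforward. If I can locate an arc $b\in A_{T}\{p,q\}\setminus(F_{xy}\cup F_{uv})$, then in $b$ either $p$ precedes $q$, forcing $q\in N_{T-F_{xy}}^{+}(p)\subseteq\{x,y\}$ and contradicting $q\in\{u,v\}$, or $q$ precedes $p$, forcing the symmetric contradiction for $(u,v)$. Since $|A_{T}\{p,q\}|=\binom{n-2}{k-2}$ and $|F_{xy}\cup F_{uv}|\le 2$, such a $b$ exists as soon as $\binom{n-2}{k-2}\ge 3$; this holds for every admissible $(n,k)$ with $3\le k\le n-1$ except the corner case $(n,k)=(4,3)$.

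The main obstacle is therefore the case $(n,k)=(4,3)$, where every pair of vertices is contained in exactly two arcs. The only subcase not already settled is case (e) for both pairs with $\{a,a'\}\subseteq A_{T}\{p,q\}$ and $a\neq a'$; to resolve it I plan to exploit the full flexibility $p\in\{x,y\}$, $q\in\{u,v\}$ allowed by (e). Because $a$ contains $\{x,y\}$ together with exactly one vertex of $\{u,v\}$, the arc $a$ lies in $A_{T}\{p,q\}$ for precisely two of the four cross-pairs $\{p,q\}\in\{x,y\}\times\{u,v\}$, and likewise for $a'$; the two sets of covered pairs share a unique common cross-pair, so $\{a,a'\}$ covers at most three of the four cross-pairs. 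Thus at least one cross-pair $\{p^{*},q^{*}\}$ satisfies $A_{T}\{p^{*},q^{*}\}\cap\{a,a'\}=\emptyset$, and either of its two arcs serves as the required $b$, completing the contradiction.
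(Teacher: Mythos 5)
Your proof is correct, but it is organized quite differently from the paper's, so a comparison is worthwhile. The paper splits into two cases according to which clauses of Lemma \ref{bothside} the two non-edges satisfy: if at least one pair satisfies (a)--(d), say $N^{+}(x)\subseteq\{y\}$, it exhibits an explicit competition (both $z$ and $w$ must dominate $x$ in suitable arcs, one containing $z$ but not $w$ and one containing $w$ but not $z$, so $zw\in E(G)$); if both pairs satisfy (e), it shows that $x$ must be the last entry of every arc on $\{x,z,w\}$ avoiding $y$ and $y$ the last entry of every arc on $\{y,z,w\}$ avoiding $x$, forcing $A_{T}^{*}\{z,w\}$ to contain at least two arcs and contradicting (e) for $zw$. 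You instead compress all five clauses into the single statement ``some endpoint $p$ has $N^{+}_{T-F}(p)\subseteq\{x,y\}$ for a forbidden set $F$ of at most one arc,'' and then derive an immediate contradiction from any arc of $A_{T}\{p,q\}$ outside the two forbidden sets: whichever of $p,q$ precedes the other acquires an out-neighbour it is not allowed to have. This is cleaner and avoids exhibiting any competition at all, but the price is that the counting bound $\binom{n-2}{k-2}\geq 3$ fails exactly at $(n,k)=(4,3)$, which you must (and do) handle separately via the observation that the two exceptional arcs can cover at most three of the four cross-pairs in $\{x,y\}\times\{u,v\}$, leaving a cross-pair whose two arcs are both usable; the paper's route never needs this corner case because its existence counts ($\binom{n-3}{k-2}\geq 1$ and $\binom{n-4}{k-3}\geq 1$) hold for all admissible $(n,k)$. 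Both arguments are complete; yours generalizes more transparently (e.g.\ it is essentially the same computation that underlies Lemma \ref{two} and Lemma \ref{three}), while the paper's stays closer to the competition-graph semantics.
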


\begin{proof} Suppose the complement $G^{c}$ of $G$ contains a pair of disjoint edges, say $xy$ and $zw$. So $xy, zw\notin E(G)$ and $x, y, z, w$ are distinct. By Lemma \ref{bothside}, we have $xy$ satisfies one of the cases $(a)-(e)$ and $zw$ satisfies one of the cases $(a)-(e)$.

Suppose that at least one of $xy$ and $zw$ satisfies one of the cases $(a)-(d)$. W.l.o.g., we assume that $N^{+}(x)\subseteq\{y\}$. Then it must be true that the vertex $z$ dominates $x$ in each arc containing $x,z$ but not containing $w$. Meanwhile, it must be true that the vertex $w$ dominates $x$ in each arc containing $x,w$ but not containing $z$. So $z$ and $w$ compete and hence $zw\in E(C_{1,2}(T))=E(G)$, a contradiction. Thus, both $xy$ and $zw$ satisfy $(e)$.


However, since $A_{T}^{*}\{x,y\}$ contains exactly an arc $a$, and $N_{T-a}^{+}(x)\subseteq \{y\}$, $N_{T-a}^{+}(y)\subseteq \{x\}$, we have the vertex $x$ must be the last entry in each arc containing $x,z,w$ but not containing $y$ and the vertex $y$ must be the last entry in each arc containing $y,z,w$ but not containing $x$. Thus, $A_{T}^{*}\{z,w\}$ contains at least two arcs, which contradicts the fact that $zw$ satisfies $(e)$.

The lemma holds.\end{proof}

\begin{lemma}\label{two} Let $G$ on $n$ vertices be the $(1,2)$-step competition graph of some $k$-hypertournament $T$, where $3\leq k\leq n-1$. Then the complement $G^{c}$ of $G$ does not contain 3-cycle.\end{lemma}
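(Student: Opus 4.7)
I would argue by contradiction: suppose $G^{c}$ contains a 3-cycle on distinct vertices $x,y,z$, so that $xy,xz,yz\notin E(G)$. By Lemma~\ref{bothside}, each of the three pairs falls into one of the types (a)--(e).

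First I would eliminate the ``sink'' cases (a) and (b). If, say, $N^{+}(x)=\emptyset$, then $x$ is the last entry in every arc containing it. Since $\binom{n-3}{k-2}\geq 1$ for $n\geq k+1$, there is an arc containing $\{x,y\}$ but not $z$, and a different arc containing $\{x,z\}$ but not $y$; these yield arc-disjoint length-one paths $y\to x$ and $z\to x$, so $y$ and $z$ compete through $x$, contradicting $yz\notin E(G)$. Hence no vertex of $\{x,y,z\}$ has empty out-neighbourhood and each pair is of type (c), (d), or (e).

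Next I would handle the case that some pair, say $(x,y)$, is of type (c) with $N^{+}(x)=\{y\}$ (type (d) is symmetric). For $(x,z)$ condition (c) is immediately impossible. If $(x,z)$ is of type (d) with $N^{+}(z)=\{x\}$, the two singleton constraints force every arc containing $\{x,y,z\}$ to have $x$ last, $z$ second-to-last, and $y$ earlier; so $N^{+}(y)\supseteq\{x,z\}$, and hence $(y,z)$ can only be of type (e). But then $|A_{T}^{\ast}\{y,z\}|$ equals the number of arcs containing $\{x,y,z\}$, which is $\binom{n-3}{k-3}\geq 2$ for $k\geq 4$; for $k=3$ one exhibits a second arc $\{x,z,w\}$ in which $z$ precedes $x$, contradicting $N_{T-a}^{+}(z)\subseteq\{y\}$ from~(e). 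If instead $(x,z)$ is of type (e) with unique arc $a$, the constraint $N^{+}(x)=\{y\}$ forces $y\in\bar{a}$ with $y$ last and $x$ second-to-last in $a$; a sub-analysis on $(y,z)$ then produces either arc-disjoint paths $x\to y$ (through a side arc containing $\{x,y\}\setminus\{z\}$) and $z\to y$ (through $a$) witnessing $xz\in E(G)$, or the requirement that $a\in A_{T}^{\ast}\{y,z\}$, which fails since $y$ is last in $a$.

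Finally, I would consider the case where all three pairs are of type (e), with unique arcs $a_{xy},a_{xz},a_{yz}$, splitting on the equalities among them. If $a_{xy}=a_{xz}=a_{yz}=a$, then $N_{T-a}^{+}(x)\subseteq\{y\}\cap\{z\}=\emptyset$ forces $x$ (and likewise $y$) to be last in every arc $\neq a$ containing it; since $\binom{n-2}{k-2}\geq 2$ some arc $b\neq a$ contains $\{x,y\}$, where both $x$ and $y$ would have to be last, a contradiction. If exactly two coincide, say $a_{xy}=a_{xz}=a\neq a_{yz}$, then applying $N_{T-a_{yz}}^{+}(y)\subseteq\{z\}$ to $a$ shows $z$ is last in $a$, contradicting $a\in A_{T}^{\ast}\{x,z\}$. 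If all three are distinct, the constraints force each $a_{\cdot\cdot}$ to contain $\{x,y,z\}$; then in $a_{xy}$ the condition $N_{T-a_{xz}}^{+}(x)\subseteq\{z\}$ makes $z$ last and $x$ second-to-last, so $y$ lies before $x$ in $a_{xy}$ and therefore $x\in N_{T-a_{yz}}^{+}(y)\subseteq\{z\}$, a contradiction.

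The main obstacle is the bookkeeping in this final case: when the three blocking arcs are distinct, one must track which near-sink condition applies in which arc and then pin down the last two positions of each arc. The combinatorial counts $\binom{n-c}{k-c}$ for arcs containing specified subsets of $\{x,y,z\}$ are what supply the ``room'' needed to force the contradictions, so the hypothesis $3\leq k\leq n-1$ is used throughout.
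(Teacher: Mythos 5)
Your overall strategy is the paper's: classify each of the three missing edges via Lemma~\ref{bothside} and run a case analysis, first killing types (a)/(b), then the near-sink types, then the all-(e) configuration. The paper, however, organizes this as three claims (none of the three pairs is of type (a)/(b); at most one is of type (c)/(d); at most one is of type (e)), which already caps the missing edges at two and so never requires analyzing a triangle with exactly one pair of type (c) and an adjacent pair of type (e). Your stronger intermediate claim (``no pair is of type (c)/(d)'') forces you into precisely that mixed configuration, and that is where your argument has a genuine gap. Suppose $N^{+}(x)=\{y\}$ and $(x,z)$ is of type (e) with unique arc $a\in A_{T}^{*}\{x,z\}$. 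Then $N_{T-a}^{+}(x)\subseteq\{z\}$ combined with $N^{+}(x)=\{y\}$ gives $N_{T-a}^{+}(x)=\emptyset$, so $x$ is the \emph{last} entry of every arc other than $a$; hence the first branch of your dichotomy (a side arc in which $x$ dominates $y$, witnessing $xz\in E(G)$) can never occur. The second branch is misstated as well: nothing forces the unique arc of $A_{T}^{*}\{y,z\}$ to be $a$. Indeed, in this configuration $A_{T}^{*}\{y,z\}$ consists exactly of the arcs containing all of $x,y,z$ other than $a$, a set of size $\binom{n-3}{k-3}-1$, which for $k\geq 4$ can be nonempty and even a singleton (e.g.\ $n=5$, $k=4$), so the existence requirement of type (e) for $(y,z)$ produces no contradiction by itself.

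The sub-case is closable with the tools you already have, just through the out-neighbourhood conditions rather than the counting one: since $x$ is last in every arc other than $a$, the vertex $y$ dominates $x$ in every arc containing $x,y$ but not $z$ (at least one such arc exists because $\binom{n-3}{k-2}\geq 1$), and such an arc cannot be the unique arc of $A_{T}^{*}\{y,z\}$ because it misses $z$; hence $x\in N_{T-a'}^{+}(y)$, contradicting $N_{T-a'}^{+}(y)\subseteq\{z\}$ when $(y,z)$ is of type (e) and contradicting $N^{+}(y)=\{z\}$ when it is of type (c), while type (d) is excluded because $z$ dominates $x$ in $a$. The remainder of your proposal is sound: the elimination of (a)/(b) matches the paper's Claim 1, the (c)/(d)-versus-(c)/(d) cases match its Claim 2, and your all-(e) analysis split by how many of $a_{xy},a_{xz},a_{yz}$ coincide is correct and, if anything, more explicit than the paper's Claim 3.
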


\begin{proof} Suppose to the contrary that the complement $G^{c}$ of $G$ contains 3-cycle, say $xyzx$. So $xy, xz, yz\notin E(G)$ and $x, y, z$ are distinct. By Lemma \ref{bothside}, we have $xy$, $xz$ and $yz$ satisfy one of the cases $(a)-(e)$, respectively.

\vskip 0.1cm
{\noindent\bf Claim 1.} None of $xy$, $xz$ and $yz$ satisfies the case $(a)$ or $(b)$.
\vskip 0.1cm

\begin{proof} Suppose at least one of $xy$, $xz$ and $yz$ satisfies the case $(a)$ or $(b)$. W.l.o.g., we assume that $xy$ satisfies $(a)$, i.e., $N^{+}(x)=\emptyset$. Then it must be true that the vertex $y$ dominates $x$ in each arc containing $x,y$ but not containing $z$. Also, it must be true that the vertex $z$ dominates $x$ in each arc containing $x,z$ but not containing $y$. So $y$ and $z$ compete and $yz\in E(C_{1,2}(T))=E(G)$, a contradiction. Thus, none of $xy$, $xz$ and $yz$ satisfies the case $(a)$ or $(b)$.\end{proof}

\vskip 0.1cm
{\noindent\bf Claim 2.} At most one of $xy$, $xz$ and $yz$ satisfies the case $(c)$ or $(d)$.
\vskip 0.1cm

\begin{proof} Suppose at least two edges among $xy$, $xz$ and $yz$ satisfy the case $(c)$ or $(d)$. W.l.o.g., assume that both $xy$ and $yz$ satisfy $(c)$ or $(d)$. We consider the following four cases.

{\it Case 1:} Both $xy$ and $yz$ satisfy $(c)$. It means that $N^{+}(x)=\{y\}$, $N^{+}(y)=\{z\}$. If $xz$ satisfies $(c)$, i.e., $N^{+}(x)=\{z\}$, contradicting $N^{+}(x)=\{y\}$. If $xz$ satisfies $(d)$, i.e., $N^{+}(z)=\{x\}$. Now the arcs containing simultaneously $x, y, z$ do not satisfy $N^{+}(x)=\{y\}$, $N^{+}(y)=\{z\}$ and $N^{+}(z)=\{x\}$, a contradiction. If $xz$ satisfies $(e)$, i.e., $A_{T}^{*}\{x,z\}$ contains exactly an arc $a$, then there exists a vertex $w$ such that $w\in N^{+}(x)$. Since $N^{+}(x)=\{y\}$, we have $w=y$. So $x$ is the second last entry, $y$ is the last entry and $z$ is in any other entry in $a$. Then the vertex $z$ dominates $y$ in $a$. Also, the vertex $x$ dominates $y$ in each arc containing $x,y$ but not containing $z$. So $xz\in E(C_{1,2}(T))=E(G)$, a contradiction.

{\it Case 2:} Both $xy$ and $yz$ satisfy $(d)$. It means that $N^{+}(y)=\{x\}$, $N^{+}(z)=\{y\}$. Similarly to Case 1, we can also get a contradiction.

{\it Case 3:}  $xy$ satisfies $(c)$ and $yz$ satisfies $(d)$. It means that $N^{+}(x)=\{y\}$, $N^{+}(z)=\{y\}$. Now the arcs containing simultaneously $x, y, z$ do not satisfy $N^{+}(x)=\{y\}$, $N^{+}(z)=\{y\}$, a contradiction.

{\it Case 4:}  $xy$ satisfies $(d)$ and $yz$ satisfies $(c)$. It means that $N^{+}(y)=\{x\}$ and $N^{+}(y)=\{z\}$. Then $x=z$, a contradiction.

Thus, at most one of $xy$, $xz$ and $yz$ satisfies the case $(c)$ or $(d)$.\end{proof}

\vskip 0.1cm
{\noindent\bf Claim 3.} At most one of $xy$, $xz$ and $yz$ satisfies the case $(e)$.
\vskip 0.1cm

\begin{proof} Suppose at least two edges among $xy$, $xz$ and $yz$ satisfy the case $(e)$. W.l.o.g., assume that both $xz$ and $yz$ satisfy $(e)$. From the assumption that $xz$ satisfies $(e)$, we get the vertex $y$ dominates $x$ in each arc containing $x,y$ but not containing $z$. From the assumption that $yz$ satisfies $(e)$, we get the vertex $x$ dominates $y$ in each arc containing $x,y$ but not containing $z$. This is a contradiction. Thus, at most one of $xy$, $xz$ and $yz$ satisfies the case $(e)$.\end{proof}

By Claim 1-3, it is impossible that $xy,xz,yz\notin E(G)$ hold simultaneously. Thus the complement $G^{c}$ of $G$ does not contain 3-cycle. The lemma holds.\end{proof}

\begin{lemma}\label{three} Let $G$ on $n$ vertices be the $(1,2)$-step competition graph of some $k$-hypertournament $T$, where $3\leq k\leq n-1$. Then the complement $G^{c}$ of $G$ does not contain $K_{1,3}$, unless $G=K_{n-1}\cup K_{1}$.\end{lemma}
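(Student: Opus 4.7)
The plan is to show that a $K_{1,3}$ in $G^c$ forces its center to be isolated in $G$, from which $G=K_{n-1}\cup K_1$ follows. Let $x$ be the center of the claimed $K_{1,3}$ and $y_1,y_2,y_3$ its leaves. First I would extract the global structure: by Lemma \ref{one}, every non-edge of $G$ other than the three given ones must meet each of $xy_1,xy_2,xy_3$, and since the $y_i$ are distinct, the only possible common vertex is $x$. Hence all non-edges of $G$ are incident to $x$, so $V(G)\setminus\{x\}$ induces a clique. Applying Lemma \ref{two} to each triple $\{x,y_i,y_j\}$ also gives $y_iy_j\in E(G)$ for all $i\neq j$.

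Next I apply Lemma \ref{bothside} to each $xy_i$, which must satisfy one of $(a)$--$(e)$. Case $(b)$ is ruled out for every $i$, because $N^+(y_i)=\emptyset$ would make $y_i$ isolated in $G$, contradicting $y_iy_j\in E(G)$. Case $(c)$ requires $N^+(x)=\{y_i\}$, a singleton, and hence holds for at most one index. If case $(a)$ holds for some index then $N^+(x)=\emptyset$; case $(a)$ of Lemma \ref{bothside} applied to every pair $(x,v)$ then yields $xv\notin E(G)$ for all $v\neq x$, so $x$ is isolated, and combined with the clique on $V(G)\setminus\{x\}$ this gives $G=K_{n-1}\cup K_1$, finishing the proof. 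Otherwise at least two of the edges, say $xy_i$ and $xy_j$, satisfy case $(d)$ or $(e)$, and the task is to derive a contradiction.

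For this, the inequality $3\le k\le n-1$ implies $\binom{n-3}{k-2}\ge 1$, so there is an arc $\alpha$ in $T$ whose vertex set contains $\{y_i,y_j\}$ but excludes $x$. I will show that in $\alpha$ both $y_i$ and $y_j$ must be the last entry, which is impossible. If $y_i$ satisfies $(d)$, then $N^+(y_i)=\{x\}$ combined with $x\notin\bar{\alpha}$ forces $y_i$ to have no successor in $\alpha$, so $y_i$ is last. If $y_i$ satisfies $(e)$, the uniquely determined arc $a_i\in A^*\{x,y_i\}$ contains $x$, so $\alpha\neq a_i$; then $N^+_{T-a_i}(y_i)\subseteq\{x\}$ applied to $\alpha$ again forces $y_i$ to be the last entry. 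The identical reasoning applied to $y_j$ yields the desired contradiction.

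The main technical subtlety I anticipate is the uniform treatment of $(d)$ and $(e)$: rather than splitting the proof into all combinations of $(c),(d),(e)$ across the three edges, the pigeonhole observation ``at most one $(c)$'' always leaves two leaves for which the same arc $\alpha$ simultaneously fails, collapsing the case analysis into a single computation.
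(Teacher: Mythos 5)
Your proof is correct. It keeps the paper's overall strategy---classify each of the three missing edges $xy_1,xy_2,xy_3$ via Lemma \ref{bothside} and rule out every distribution of the cases $(a)$--$(e)$---but reorganizes the case analysis in two genuinely leaner ways. First, where the paper's Claim 1 disposes of cases $(a)$/$(b)$ by an explicit arc-by-arc construction showing that a vertex with empty out-neighbourhood forces $C_{1,2}(T)=K_{n-1}\cup K_{1}$, you instead derive from Lemma \ref{one} that every non-edge of $G$ must be incident to the centre $x$ (a non-edge avoiding $x$ cannot meet all three of $xy_1,xy_2,xy_3$), so $V(G)\setminus\{x\}$ induces a clique and the conclusion $G=K_{n-1}\cup K_{1}$ is immediate once $N^{+}(x)=\emptyset$; be aware, though, that the paper's longer construction is not wasted, since it is reused as the realizability half of the theorem in Section 5 for $K_{n-1}\cup K_{1}$, which your argument does not supply. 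Second, the paper separates ``at most one of $(c),(d)$'' (Claim 2, a four-case analysis) from ``at most one $(e)$'' (Claim 3); your observation that both $(d)$ and $(e)$ force the relevant leaf to be the last entry of any arc $\alpha$ containing two leaves but avoiding $x$ (such an arc exists precisely because $\binom{n-3}{k-2}\geq 1$ when $k\leq n-1$, and $\alpha\neq a_i$ since $a_i\in A_{T}^{*}\{x,y_i\}$ contains $x$) collapses both claims into a single contradiction, leaving only the trivial remark that $(c)$ determines $N^{+}(x)$ and so can occur for at most one index. All individual steps check out.
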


\begin{proof} Let $T$ be a $k$-hypertournament on $n$ vertices, where $3\leq k\leq n-1$, and $G$ be the $(1,2)$-step competition graph of $T$. Assume $G\neq K_{n-1}\cup K_{1}$. Now we show that the complement $G^{c}$ of $G$ does not contain $K_{1,3}$. Suppose not. Let $\{x,y,z,w\}$ and $\{xy,xz,xw\}$ be the vertex set and edge set of the subgraph $K_{1,3}$, respectively. So $xy,xz,xw\notin E(G)$. By Lemma \ref{bothside}, we have $xy$, $xz$ and $xw$ satisfy one of the cases $(a)-(e)$.

\vskip 0.1cm
{\noindent\bf Claim 1.} None of $xy$, $xz$ and $xw$ satisfies the case $(a)$ or $(b)$.
\vskip 0.1cm

\begin{proof} {Suppose at least one of $xy$, $xz$ and $xw$ satisfies the case $(a)$ or $(b)$. W.l.o.g., assume that $xy$ satisfies $(a)$, i.e., $N^{+}(x)=\emptyset$. Let $V(T)=\{v_{1},v_{2},\cdots,v_{n}\}$ and let $x=v_{n}$. By Lemma \ref{bothside}, for all $1\leq i\leq n-1$, we have $v_{i}v_{n}\notin E(C_{1,2}(T))$. We claim that for all $1\leq i<j\leq n-1$, $v_{i}v_{j}\in E(C_{1,2}(T))$.

\hangafter 1
\hangindent 0.8em
\noindent
$\bullet$ For $1\leq i<j\leq n-(k-1)$, the vertex $v_{i}$ dominates $v_{n}$ by the arc consisting of $v_{i},\cdots,v_{i+(k-3)},v_{n-1},v_{n}$ and the vertex $v_{j}$ dominates $v_{n}$ by the arc consisting of $v_{j},\cdots,v_{j+(k-3)},v_{n-1},v_{n}$. Then $v_{i}$ and $v_{j}$ compete and $v_{i}v_{j}\in E(C_{1,2}(T))$.

\hangafter 1
\hangindent 0.8em
\noindent
$\bullet$ For $n-(k-2)\leq i<j\leq n-1$, the vertex $v_{i}$ dominates $v_{n}$ by the arc consisting of $v_{n-k},\cdots,v_{n-3},v_{n-1},v_{n}$ for $n-(k-2)\leq i\leq n-3$ and by the arc consisting of $v_{1},\cdots,v_{1+(k-3)},v_{n-2},v_{n}$ for $i=n-2$ and the vertex $v_{j}$ dominates $v_{n}$ by the arc consisting of $v_{n-(k-1)},\cdots,v_{n-2},v_{n-1},v_{n}$. Then $v_{i}$ and $v_{j}$ compete and $v_{i}v_{j}\in E(C_{1,2}(T))$.

\hangafter 1
\hangindent 0.8em
\noindent
$\bullet$ For $1\leq i\leq n-k$ and $n-(k-2)\leq j\leq n-1$, the vertex $v_{i}$ dominates $v_{n}$ by the arc consisting of $v_{i},\cdots,v_{i+(k-3)},v_{n-1},v_{n}$ and the vertex $v_{j}$ dominates $v_{n}$ by the arc consisting of $v_{n-(k-1)},\cdots,v_{n-2},v_{n-1},v_{n}$. Then $v_{i}$ and $v_{j}$ compete and $v_{i}v_{j}\in E(C_{1,2}(T))$.

\hangafter 1
\hangindent 0.8em
\noindent
$\bullet$ For $i=n-(k-1)$ and $n-(k-2)\leq j\leq n-1$, the vertex $v_{n-(k-1)}$ dominates $v_{n}$ by the arc consisting of $v_{1},\cdots,v_{1+(k-3)},v_{n-2},v_{n}$ for $k=3$ and by the arc consisting of $v_{n-k},v_{n-(k-1)},\cdots,v_{n-3},v_{n-1},v_{n}$ for $4\leq k\leq n-1$ and the vertex $v_{j}$ dominates $v_{n}$ by the arc consisting of $v_{n-(k-1)},\cdots,v_{n-2},v_{n-1},v_{n}$. Then $v_{j}$ and $v_{n-(k-1)}$ compete and $v_{j}v_{n-(k-1)}\in E(C_{1,2}(T))$.}

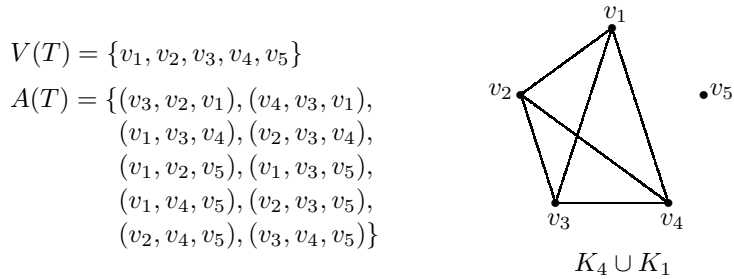
\begin{figure}[h]
\unitlength0.3cm
\begin{center}
\begin{picture}(35,12)
\put(24.17,3){\circle*{.3}}
\put(23.8,2){$v_3$}
\put(29.17,3){\circle*{.3}}
\put(28.8,2){$v_4$}
\put(22.63,7.8){\circle*{.3}}
\put(21.2,7.8){$v_2$}
\put(30.74,7.8){\circle*{.3}}
\put(31,7.8){$v_5$}
\put(26.67,10.75){\circle*{.3}}
\put(26.3,11.2){$v_1$}

\qbezier(26.67,10.75)(24.65,9.275)(22.63,7.8)
\qbezier(24.17,3)(23.4,5.4)(22.63,7.8)
\qbezier(24.17,3)(27.67,3)(29.17,3)
\qbezier(22.63,7.8)(25.9,5.4)(29.17,3)
\qbezier(26.67,10.75)(25.42,6.875)(24.17,3)
\qbezier(26.67,10.75)(27.92,6.875)(29.17,3)
\put(25,0){$K_4\cup K_1$}

\put(0,9.28){$V(T)=\{v_1,v_2,v_3,v_4,v_5\}$}
\put(0,7.28){$A(T)=\{(v_3,v_2,v_1),(v_4,v_3,v_1),$}
\put(4.8,5.78){$(v_1,v_3,v_4),(v_2,v_3,v_4),$}
\put(4.8,4.28){$(v_1,v_2,v_5),(v_1,v_3,v_5),$}
\put(4.8,2.78){$(v_1,v_4,v_5),(v_2,v_3,v_5),$}
\put(4.8,1.28){$(v_2,v_4,v_5),(v_3,v_4,v_5)\}$}
\end{picture}
\caption{A 3-hypertournament $T$ on 5 vertices and its competition graph $K_4\cup K_1$. }
\end{center}
\label{fig:t}
\end{figure}

Then $C_{1,2}(T)=K_{n-1}\cup K_{1}$, a contradiction. See an example in Figure 1. Thus, none of $xy$, $xz$ and $xw$ satisfies the case $(a)$ or $(b)$.
\end{proof}

\vskip 0.1cm
{\noindent\bf Claim 2.} At most one of $xy$, $xz$ and $xw$ satisfies the case $(c)$ or $(d)$.
\vskip 0.1cm

\begin{proof} Suppose at least two edges among $xy$, $xz$ and $xw$ satisfy the case $(c)$ or $(d)$. W.l.o.g., assume that both $xy$ and $xz$ satisfy the case $(c)$ or $(d)$. We consider the following four cases.

{\it Case 1:} Both $xy$ and $xz$ satisfy $(c)$. It means that $N^{+}(x)=\{y\}$ and $N^{+}(x)=\{z\}$. Then $y=z$, a contradiction.

{\it Case 2:} Both $xy$ and $xz$ satisfy $(d)$. It means that $N^{+}(y)=\{x\}$ and $N^{+}(z)=\{x\}$.
Now the arcs containing simultaneously $x, y, z$ do not satisfy $N^{+}(y)=\{x\}$, $N^{+}(z)=\{x\}$, a contradiction.

{\it Case 3:} $xy$ satisfies $(c)$ and $xz$ satisfies $(d)$. It means that $N^{+}(x)=\{y\}$ and $N^{+}(z)=\{x\}$. If $xw$ satisfies $(c)$, then $N^{+}(x)=\{w\}$, contradicting $N^{+}(x)=\{y\}$. If $xw$ satisfies $(d)$, then $N^{+}(w)=\{x\}$. Now the arcs containing simultaneously $x, z, w$ do not satisfy $N^{+}(z)=\{x\}$, $N^{+}(w)=\{x\}$, a contradiction. If $xw$ satisfies $(e)$. Let $b$ be an arc containing $x, z, w$. $N^{+}(x)=\{y\}$ yields $z$ dominates $x$ in $b$. $N^{+}(z)=\{x\}$ yields $z$ is the second last entry, $x$ is the last entry and $w$ is in other entry of $b$. Clearly, $b\notin A_{T}^{*}\{x,w\}$, i.e., $z\in N_{T-a}^{+}(w)$, which contradicts the fact that $N_{T-a}^{+}(w)\subseteq \{x\}$.

{\it Case 4:} $xy$ satisfies $(d)$ and $xz$ satisfies $(c)$. Similarly to Case 3, we can also get a contradiction.

Thus, at most one of $xy$, $xz$ and $xw$ satisfies the case $(c)$ or $(d)$.\end{proof}

\vskip 0.1cm
{\noindent\bf Claim 3.} At most one of $xy$, $xz$ and $xw$ satisfies the case $(e)$.
\vskip 0.1cm

\begin{proof} Suppose at least two edges among $xy$, $xz$ and $xw$ satisfy the case $(e)$. Assume that $xy$ and $xz$ satisfy $(e)$. From the assumption that $xy$ satisfies $(e)$, we get the vertex $z$ dominates $y$ in each arc containing $y,z$ but not containing $x$. From the assumption that $xz$ satisfies $(e)$, we get the vertex $y$ dominates $z$ in each arc containing $y,z$ but not containing $x$. This is a contradiction. Thus, at most one of $xy$, $xz$ and $xw$ satisfies the case $(e)$.\end{proof}

By Claim 1-3, it is impossible that $xy,xz,xw\notin E(G)$ hold simultaneously. Thus the complement $G^{c}$ of $G$ does not contain $K_{1,3}$ unless $G=K_{n-1}\cup K_{1}$. The lemma holds.\end{proof}

By Corollary \ref{both} and the proof of Lemma \ref{three}, we obtain the following result.

\begin{corollary}\label{four} Let $G$ on $n$ vertices be the $(1,2)$-step competition graph of some strong $k$-hypertournament $T$, where $3\leq k\leq n-1$. Then the complement $G^{c}$ of $G$ does not contain $K_{1,3}$.\end{corollary}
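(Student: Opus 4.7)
The plan is to reduce the statement to the proof of Lemma \ref{three} by using Corollary \ref{both} as a sharper list of missing-edge types. Since $T$ is strong, every vertex has a nonempty out-neighbourhood, so cases (a) and (b) of Lemma \ref{bothside} (namely $N^{+}(x)=\emptyset$ or $N^{+}(y)=\emptyset$) cannot occur; Corollary \ref{both} records this by retaining only the three cases corresponding to (c), (d), (e) of Lemma \ref{bothside}.

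The main step is to assume for contradiction that $G^{c}$ contains a $K_{1,3}$ centred at some vertex $x$ with leaves $y$, $z$, $w$, so that $xy$, $xz$, $xw \notin E(G)$. By Corollary \ref{both}, each of these three non-edges falls into one of the three surviving cases. I would then replay, essentially verbatim, Claims 2 and 3 from the proof of Lemma \ref{three}: Claim 2 shows that at most one of $xy$, $xz$, $xw$ can satisfy case (c) or (d) of Lemma \ref{bothside} (the four sub-cases depend only on local arc constraints among $\{x,y,z,w\}$ and not on strongness), and Claim 3 shows that at most one of them can satisfy case (e). Together these cover at most two of the three non-edges, yielding the desired contradiction.

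The key observation, and the reason Corollary \ref{both} rather than Lemma \ref{bothside} must be invoked, is that the exceptional output $G=K_{n-1}\cup K_{1}$ in Lemma \ref{three} arose exclusively from Claim 1 of its proof, which in turn relied on case (a) or (b) of Lemma \ref{bothside}. Strongness excludes that hypothesis, and so the exception vanishes. I do not anticipate a substantive obstacle: Claims 2 and 3 are local combinatorial arguments about which vertex is last in which arc, independent of strongness, and transfer to the present setting without alteration.
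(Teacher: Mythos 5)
Your proposal is correct and matches the paper's intended argument exactly: the paper derives this corollary in one line from Corollary \ref{both} together with the proof of Lemma \ref{three}, which is precisely your observation that strongness kills cases (a) and (b) (hence Claim 1 and the $K_{n-1}\cup K_{1}$ exception), while Claims 2 and 3 carry over unchanged.
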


\section{Strong $k$-hypertournaments}

\begin{theorem}\label{strong} A graph $G$ on $n$ vertices is the $(1,2)$-step competition graph of some strong $k$-hypertournament $T$ with $3\leq k\leq n-1$ if and only if $G$ is $K_{n}$, $K_{n}-E(P_{2})$, or $K_{n}-E(P_{3})$.\end{theorem}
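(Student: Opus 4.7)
The plan is to prove the two directions separately. For necessity, suppose $G=C_{1,2}(T)$ for some strong $k$-hypertournament $T$. By Corollary \ref{both}, every non-edge of $G$ satisfies one of three local conditions, namely $N^+(x)=\{y\}$, $N^+(y)=\{x\}$, or the ``designated arc'' case in which $A_T^*\{x,y\}$ is a single arc $a$ with $N_{T-a}^+(x)\subseteq\{y\}$ and $N_{T-a}^+(y)\subseteq\{x\}$. I would then invoke Lemma \ref{one}, Lemma \ref{two}, and Corollary \ref{four} simultaneously to conclude that $G^c$ contains no two disjoint edges, no triangle, and no $K_{1,3}$.

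The second step is a short combinatorial observation: the only graphs on $n$ vertices in which any two edges share a vertex and which contain neither a triangle nor a $K_{1,3}$ are the empty graph, a single edge $P_2$, or the path $P_3$. Indeed, if $G^c$ has two edges they must share a vertex (otherwise two disjoint edges arise), and any third edge either avoids the shared vertex, producing a disjoint edge or a triangle, or is incident to the shared vertex, pushing its degree to three and producing $K_{1,3}$. Hence $G\in\{K_n,\,K_n-E(P_2),\,K_n-E(P_3)\}$, proving the ``only if'' direction.

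For sufficiency, I would exhibit, for each of the three target graphs, an explicit strong $k$-hypertournament realizing it. For $K_n-E(P_2)$ with the missing edge $uv$, fix one ``designated'' arc $a$ containing both $u,v$ (with a third vertex last) and choose every other arc on $\{u,v\}$ so that $u$ or $v$ is its last entry, forcing the unique non-edge via Corollary \ref{both}(c); for $K_n-E(P_3)$ the same idea is applied to two missing edges sharing a common vertex. The remaining arcs must be chosen so that every other pair of vertices $(1,2)$-step competes and so that $T$ remains strong, which can be arranged by threading a directed Hamiltonian-type structure through the otherwise ``near-transitive'' arc set. For $K_n$, a generic strong hypertournament (for instance one built from a rotational arc pattern) should realize every pair as a $(1,2)$-step competition.

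The main obstacle will be the sufficiency direction, and in particular the tension between condition (e) of Lemma \ref{bothside} and strong connectivity. The designated-arc condition forces essentially every arc touching $u$ or $v$ to have $u$ or $v$ as its last entry, while strong connectivity demands some arc leaving $u$ and some arc leaving $v$ to close cycles back into the rest of $T$. Designing an arc set that simultaneously realises the forbidden pair(s) by case (e), realises every other pair as a $(1,2)$-step competition, and maintains strong connectivity, and doing so uniformly for every $k$ with $3\le k\le n-1$, is the technical heart of the argument and will require an explicit labelled construction together with a case-by-case verification of the competing pairs.
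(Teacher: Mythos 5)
Your necessity argument is essentially the paper's: combine Lemma \ref{one}, Lemma \ref{two} and Corollary \ref{four} to forbid disjoint edges, triangles and $K_{1,3}$ in $G^{c}$, then observe that the only such graphs are empty, $P_{2}$ or $P_{3}$. That part is correct and complete (the paper phrases it as ``$G^{c}$ has at most two edges,'' but the content is the same).

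The sufficiency direction, however, is a plan rather than a proof, and the plan is where all the work lies. You correctly identify the central tension --- condition (e)/(c) forces nearly every arc touching $u$ or $v$ to end in $u$ or $v$, while strongness needs arcs leaving them --- but you do not resolve it; you only assert that it ``can be arranged.'' Two concrete gaps: first, your recipe for $K_{n}-E(P_{2})$ only constrains the arcs containing \emph{both} $u$ and $v$, whereas Corollary \ref{both}(c) requires $N_{T-a}^{+}(u)\subseteq\{v\}$, i.e.\ a constraint on \emph{every} arc containing $u$, including those avoiding $v$; second, nothing in your sketch verifies that all the remaining $\binom{n}{2}-1$ (or $-2$) pairs actually do $(1,2)$-step compete, which is a nontrivial case analysis over the index ranges and over all $k$ with $3\le k\le n-1$. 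The paper resolves both issues with explicit constructions: it starts from the transitive $k$-hypertournament and reverses the single arc $(v_{1},v_{2},v_{n-(k-3)},\ldots,v_{n})$ to get $T_{1}$ with $C_{1,2}(T_{1})=K_{n}-E(P_{2})$ (the reversed arc is exactly the designated arc of case (c) \emph{and} the arc that closes the cycles needed for strongness), reverses a second arc to get $T_{2}$ with $C_{1,2}(T_{2})=K_{n}$, and builds a near-transitive $T_{3}$ in which $v_{1},v_{2}$ are pushed to the tail of every arc containing them so that $N^{+}(v_{1})=\{v_{2}\}$ and $N^{+}(v_{2})=\{v_{3}\}$, giving $K_{n}-E(P_{3})$; each construction is then checked pair by pair. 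Until you supply constructions at this level of explicitness and carry out the verification, the ``if'' direction is not established.
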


\begin{proof} {We first show the ``if" part. Let $T$ be a transitive $k$-hypertournament with the vertices $v_{1},v_{2},\cdots,v_{n}$. Let $T_{1}$ be a $k$-hypertournament obtained from $T$ by replacing the arc $(v_{1},v_{2},v_{n-(k-3)},\cdots,v_{n})$ with $(v_{n},\cdots,v_{n-(k-3)},v_{2},v_{1})$. See an example in Figure 2. It is easy to check that $T_{1}$ is strong. Now we show that $C_{1,2}(T_{1})=K_{n}-E(P_{2})$. For convenience, let $a=(v_{n},\cdots,v_{n-(k-3)},v_{2},v_{1})$. We claim that $v_{n-1}v_{n}\notin E(C_{1,2}(T_{1}))$. Indeed, for $k=3$, the vertex $v_{n-1}$ has a unique out-neighbour $v_{n}$ and Corollary \ref{both} $(a)$ implies $v_{n-1}v_{n}\notin E(C_{1,2}(T_{1}))$. For $4\leq k\leq n-1$, $A_{T_{1}}^{*}\{v_{n-1},v_{n}\}$ contains exactly an arc $a$, and $N_{T_{1}-a}^{+}(v_{n-1})=\{v_{n}\}$, $N_{T_{1}-a}^{+}(v_{n})=\emptyset$. Corollary \ref{both} $(c)$ implies $v_{n-1}v_{n}\notin E(C_{1,2}(T_{1}))$. We also claim $v_{i}v_{j}\in E(C_{1,2}(T_{1}))$ for all $\{i,j\}\neq \{n-1,n\}$. W.l.o.g., assume $i<j$.

\begin{figure}[h]
\unitlength0.3cm
\begin{center}
\begin{picture}(35,12)
\put(24.17,3){\circle*{.3}}
\put(23.8,2){$v_3$}
\put(29.17,3){\circle*{.3}}
\put(28.8,2){$v_4$}
\put(22.63,7.8){\circle*{.3}}
\put(21.2,7.8){$v_2$}
\put(30.74,7.8){\circle*{.3}}
\put(31,7.8){$v_5$}
\put(26.67,10.75){\circle*{.3}}
\put(26.3,11.2){$v_1$}

\qbezier(26.67,10.75)(24.65,9.275)(22.63,7.8)
\qbezier(24.17,3)(23.4,5.4)(22.63,7.8)
\qbezier(24.17,3)(27.67,3)(29.17,3)
\qbezier(26.67,10.75)(28.705,9.275)(30.74,7.8)
\qbezier(22.63,7.8)(25.9,5.4)(29.17,3)
\qbezier(22.63,7.8)(26.69,7.8)(30.75,7.8)
\qbezier(24.17,3)(27.455,5.4)(30.74,7.8)
\qbezier(26.67,10.75)(25.42,6.875)(24.17,3)
\qbezier(26.67,10.75)(27.92,6.875)(29.17,3)
\put(25,0){$K_5-P_2$}

\put(0,9.28){$V(T_1)=\{v_1,v_2,v_3,v_4,v_5\}$}
\put(0,7.28){$A(T_1)=\{(v_1,v_2,v_3),(v_1,v_2,v_4),$}
\put(4.8,5.78){$(v_1,v_3,v_4),(v_1,v_3,v_5),$}
\put(4.8,4.28){$(v_1,v_4,v_5),(v_2,v_3,v_4),$}
\put(4.8,2.78){$(v_2,v_3,v_5),(v_2,v_4,v_5),$}
\put(4.8,1.28){$(v_3,v_4,v_5),(v_5,v_1,v_2)\}$}
\end{picture}
\caption{A strong 3-hypertournament $T_1$ on 5 vertices and its competition graph $K_5-P_2$. }
\end{center}
\label{Tone}
\end{figure}
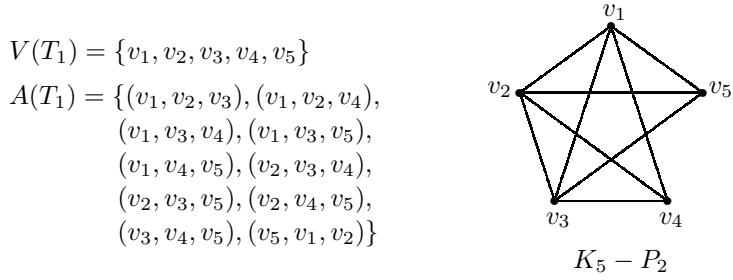

\hangafter 1
\hangindent 0.8em
\noindent
$\bullet$ For $1\leq i<j\leq n-(k-1)$, $v_{i}$ dominates $v_{n}$ by the arc $(v_{i},v_{n-(k-2)},\cdots,v_{n-1},v_{n})$ and $v_{j}$ dominates $v_{n}$ by the arc $(v_{j},v_{n-(k-2)},\cdots,v_{n-1},v_{n})$. Then $v_{i}$ and $v_{j}$ compete and $v_{i}v_{j}\in E(C_{1,2}(T_{1}))$.

\hangafter 1
\hangindent 0.8em
\noindent
$\bullet$ For $n-(k-2)\leq i<j\leq n-1$, $v_{i}$ dominates $v_{n}$ by the arc $(v_{1},v_{n-(k-2)},\cdots,v_{i},\\\cdots,v_{j},\cdots,v_{n-1},v_{n})$ and $v_{j}$ dominates $v_{n}$ by the arc $(v_{2},v_{n-(k-2)},\cdots,v_{i},\cdots,\\v_{j},\cdots,v_{n-1},v_{n})$. Then $v_{i}$ and $v_{j}$ compete and $v_{i}v_{j}\in E(C_{1,2}(T_{1}))$.

\hangafter 1
\hangindent 0.8em
\noindent
$\bullet$ For $i=1$ and $n-(k-2)\leq j\leq n-1$, $v_{1}$ dominates $v_{n}$ by the arc $(v_{1},v_{n-(k-2)},\cdots,v_{n-1},v_{n})$ and $v_{j}$ dominates $v_{n}$ by the arc $(v_{2},v_{n-(k-2)},\cdots,\\v_{j},\cdots,v_{n-1},v_{n})$. Then $v_{1}$ and $v_{j}$ compete and $v_{1}v_{j}\in E(C_{1,2}(T_{1}))$.

\hangafter 1
\hangindent 0.8em
\noindent
$\bullet$ For $2\leq i\leq n-(k-1)$ and $n-(k-2)\leq j\leq n-1$, $v_{i}$ dominates $v_{n}$ by the arc $(v_{i},v_{n-(k-2)},\cdots,v_{n-1},v_{n})$ and $v_{j}$ dominates $v_{n}$ by the arc $(v_{1},v_{n-(k-2)},\cdots,v_{j},\cdots,v_{n-1},v_{n})$. Then $v_{i}$ and $v_{j}$ compete and $v_{i}v_{j}\in E(C_{1,2}(T_{1}))$.

\hangafter 1
\hangindent 0.8em
\noindent
$\bullet$ For $i=1$ and $j=n$, $v_{n}$ dominates $v_{2}$ by the arc $a$, $v_{2}$ dominates $v_{n-1}$ by the arc $(v_{2},v_{n-(k-2)},\cdots,v_{n-1},v_{n})$ and $v_{1}$ dominates $v_{n-1}$ by the arc $(v_{1},v_{n-(k-2)},\cdots,v_{n-1},v_{n})$. Then $v_{1}$ and $v_{n}$ (1,2)-step compete and $v_{1}v_{n}\in E(C_{1,2}(T_{1}))$.

\hangafter 1
\hangindent 0.8em
\noindent
$\bullet$ For $2\leq i\leq n-2$ and $j=n$, $v_{n}$ dominates $v_{1}$ by the arc $a$, $v_{1}$ dominates $v_{n-1}$ by the arc $(v_{1},v_{n-(k-2)},\cdots,v_{n-1},v_{n})$ and $v_{i}$ dominates $v_{n-1}$ by $(v_{i},v_{n-(k-2)},\cdots,v_{n-1},v_{n})$ for $2\leq i\leq n-(k-1)$ and by $(v_{2},v_{n-(k-2)},\cdots,v_{i},\\\cdots,v_{n-1},v_{n})$ for $n-(k-2)\leq i\leq n-2$. Then $v_{i}$ and $v_{n}$ (1,2)-step compete and $v_{i}v_{n}\in E(C_{1,2}(T_{1}))$.}

Thus $C_{1,2}(T_{1})=K_{n}-E(P_{2})$.

{Let $T_{2}$ be a $k$-hypertournament obtained from $T_{1}$ above by replacing the arc $(v_{1},v_{2},v_{n-(k-2)},\cdots,v_{n-1})$ with $(v_{n-1},\cdots,v_{n-(k-2)},v_{2},v_{1})$. See an example in Figure 3. It is easy to check that $T_{2}$ is strong. Now we show that $C_{1,2}(T_{2})=K_{n}$.

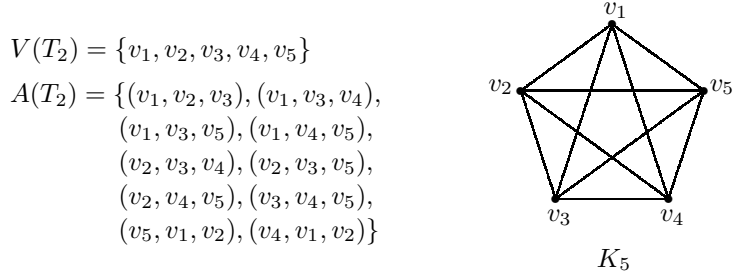
\begin{figure}[h]
\unitlength0.3cm
\begin{center}
\begin{picture}(35,12)
\put(24.17,3){\circle*{.3}}
\put(23.8,2){$v_3$}
\put(29.17,3){\circle*{.3}}
\put(28.8,2){$v_4$}
\put(22.63,7.8){\circle*{.3}}
\put(21.2,7.8){$v_2$}
\put(30.74,7.8){\circle*{.3}}
\put(31,7.8){$v_5$}
\put(26.67,10.75){\circle*{.3}}
\put(26.3,11.2){$v_1$}

\qbezier(26.67,10.75)(24.65,9.275)(22.63,7.8)
\qbezier(24.17,3)(23.4,5.4)(22.63,7.8)
\qbezier(24.17,3)(27.67,3)(29.17,3)
\qbezier(29.17,3)(29.955,5.4)(30.74,7.8)
\qbezier(26.67,10.75)(28.705,9.275)(30.74,7.8)
\qbezier(22.63,7.8)(25.9,5.4)(29.17,3)
\qbezier(22.63,7.8)(26.69,7.8)(30.75,7.8)
\qbezier(24.17,3)(27.455,5.4)(30.74,7.8)
\qbezier(26.67,10.75)(25.42,6.875)(24.17,3)
\qbezier(26.67,10.75)(27.92,6.875)(29.17,3)
\put(26,0){$K_5$}

\put(0,9.28){$V(T_2)=\{v_1,v_2,v_3,v_4,v_5\}$}
\put(0,7.28){$A(T_2)=\{(v_1,v_2,v_3),(v_1,v_3,v_4),$}
\put(4.8,5.78){$(v_1,v_3,v_5),(v_1,v_4,v_5),$}
\put(4.8,4.28){$(v_2,v_3,v_4),(v_2,v_3,v_5),$}
\put(4.8,2.78){$(v_2,v_4,v_5),(v_3,v_4,v_5),$}
\put(4.8,1.28){$(v_5,v_1,v_2),(v_4,v_1,v_2)\}$}
\end{picture}
\caption{A strong 3-hypertournament $T_2$ on 5 vertices and its competition graph $K_5$. }
\end{center}
\label{Ttwo}
\end{figure}

\hangafter 1
\hangindent 0.8em
\noindent
$\bullet$ For $\{i,j\}\neq \{n-1,n\}$, similarly to the proof of $T_{1}$, we have $v_{i}v_{j}\in E(C_{1,2}(T_{2}))$.

\hangafter 1
\hangindent 0.8em
\noindent
$\bullet$ For $i=n-1$ and $j=n$, $v_{n-1}$ dominates $v_{1}$ by the arc $(v_{n-1},\cdots,v_{n-(k-2)},v_{2},v_{1})$ and $v_{n}$ dominates $v_{1}$ by the arc $(v_{n},\cdots,v_{n-(k-3)},v_{2},v_{1})$. Then $v_{n-1}$ and $v_{n}$ compete and $v_{n-1}v_{n}\in E(C_{1,2}(T_{2}))$.}

Thus $C_{1,2}(T_{2})=K_{n}$.

{Let $T_{3}$ be a $k$-hypertournament with the vertices $v_{1},v_{2},\cdots,v_{n}$ satisfying the following:

1. each arc excluding $v_{1},v_{2}$ satisfies $i<j$ if and only if $v_{i}$ precedes $v_{j}$;

2. each arc including $v_{1},v_{2}$ satisfies that $v_{1}$ is the second last entry, $v_{2}$ is the last entry and the remaining $k-2$ entries satisfy $i<j$ if and only if $v_{i}$ precedes $v_{j}$;

3. each arc including $v_{1}$ but excluding $v_{2}$ satisfies that $v_{1}$ is the last entry and the remaining $k-1$ entries satisfy $i<j$ if and only if $v_{i}$ precedes $v_{j}$;

4. each arc including $v_{2}$ but excluding $v_{1},v_{3}$ satisfies that $v_{2}$ is the last entry and the remaining $k-1$ entries satisfy $i<j$ if and only if $v_{i}$ precedes $v_{j}$;

5. each arc including $v_{2},v_{3}$ but excluding $v_{1}$ satisfies that $v_{2}$ is the second last entry, $v_{3}$ is the last entry and the remaining $k-2$ entries satisfy $i<j$ if and only if $v_{i}$ precedes $v_{j}$.

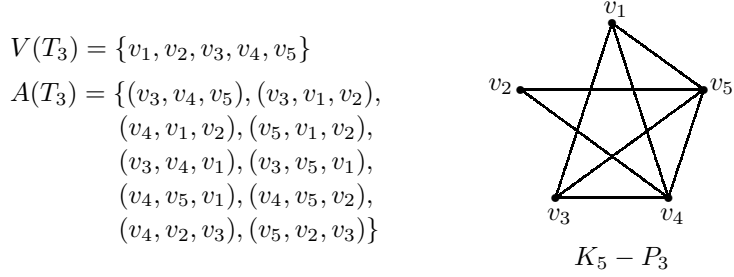
\begin{figure}[h]
\unitlength0.3cm
\begin{center}
\begin{picture}(35,12)
\put(24.17,3){\circle*{.3}}
\put(23.8,2){$v_3$}
\put(29.17,3){\circle*{.3}}
\put(28.8,2){$v_4$}
\put(22.63,7.8){\circle*{.3}}
\put(21.2,7.8){$v_2$}
\put(30.74,7.8){\circle*{.3}}
\put(31,7.8){$v_5$}
\put(26.67,10.75){\circle*{.3}}
\put(26.3,11.2){$v_1$}

\qbezier(24.17,3)(27.67,3)(29.17,3)
\qbezier(29.17,3)(29.955,5.4)(30.74,7.8)
\qbezier(26.67,10.75)(28.705,9.275)(30.74,7.8)
\qbezier(22.63,7.8)(25.9,5.4)(29.17,3)
\qbezier(22.63,7.8)(26.69,7.8)(30.75,7.8)
\qbezier(24.17,3)(27.455,5.4)(30.74,7.8)
\qbezier(26.67,10.75)(25.42,6.875)(24.17,3)
\qbezier(26.67,10.75)(27.92,6.875)(29.17,3)
\put(25,0){$K_5-P_3$}

\put(0,9.28){$V(T_3)=\{v_1,v_2,v_3,v_4,v_5\}$}
\put(0,7.28){$A(T_3)=\{(v_3,v_4,v_5),(v_3,v_1,v_2),$}
\put(4.8,5.78){$(v_4,v_1,v_2),(v_5,v_1,v_2),$}
\put(4.8,4.28){$(v_3,v_4,v_1),(v_3,v_5,v_1),$}
\put(4.8,2.78){$(v_4,v_5,v_1),(v_4,v_5,v_2),$}
\put(4.8,1.28){$(v_4,v_2,v_3),(v_5,v_2,v_3)\}$}
\end{picture}
\caption{A strong 3-hypertournament $T_3$ on 5 vertices and its competition graph $K_5-P_3$. }
\end{center}
\label{Tthree}
\end{figure}

See an example in Figure 4. It is easy to check that $T_{3}$ is strong. Now we show that $C_{1,2}(T_{3})=K_{n}-E(P_{3})$. Note that $N^{+}(v_{1})=\{v_{2}\}$ and $N^{+}(v_{2})=\{v_{3}\}$. By Corollary \ref{both} $(a)$, we have $v_{1}v_{2}, v_{2}v_{3}\notin E(C_{1,2}(T_{3}))$. Now we consider the arc $v_{i}v_{j}$ for $\{i,j\}\neq\{1,2\}$ and $\{i,j\}\neq\{2,3\}$. W.l.o.g., assume $i<j$.

\hangafter 1
\hangindent 0.8em
\noindent
$\bullet$ For $3\leq i<j\leq n-(k-3)$, $v_{i}$ dominates $v_{2}$ by the arc $(v_{i},\cdots,v_{i+(k-3)},v_{1},v_{2})$ and $v_{j}$ dominates $v_{2}$ by the arc $(v_{j},\cdots,v_{j+(k-3)},v_{1},v_{2})$. Then $v_{i}$ and $v_{j}$ compete and $v_{i}v_{j}\in E(C_{1,2}(T_{3}))$.

\hangafter 1
\hangindent 0.8em
\noindent
$\bullet$ For $n-(k-4)\leq i<j\leq n$, $v_{i}$ dominates $v_{2}$ by the arc $(v_{n-(k-2)},\cdots,v_{i},\cdots,v_{j},\\\cdots,v_{n-1},v_{1},v_{2})$ and $v_{j}$ dominates $v_{2}$ by the arc $(v_{n-(k-3)},\cdots,v_{i},\cdots,v_{j},\cdots,\\v_{n},v_{1},v_{2})$. Then $v_{i}$ and $v_{j}$ compete and $v_{i}v_{j}\in E(C_{1,2}(T_{3}))$.

\hangafter 1
\hangindent 0.8em
\noindent
$\bullet$ For $3\leq i\leq n-(k-3)$ and $n-(k-4)\leq j\leq n$, $v_{i}$ dominates $v_{2}$ by the arc $(v_{i},\cdots,v_{i+(k-3)},v_{1},v_{2})$ for $3\leq i\leq n-(k-2)$ and by the arc $(v_{n-(k-2)},v_{n-(k-3)},\cdots,v_{n-1},v_{1},v_{2})$ for $i=n-(k-3)$ and $v_{j}$ dominates $v_{2}$ by the arc $(v_{n-(k-3)},\cdots,v_{j},\cdots,v_{n},v_{1},v_{2})$. Then $v_{i}$ and $v_{j}$ compete and $v_{i}v_{j}\in E(C_{1,2}(T_{3}))$.

\hangafter 1
\hangindent 0.8em
\noindent
$\bullet$ For $i=1$ and $3\leq j\leq n-(k-2)$, $v_{1}$ dominates $v_{2}$ by the arc $(v_{n-(k-3)},\cdots,v_{n},\\v_{1},v_{2})$ and $v_{j}$ dominates $v_{2}$ by the arc $(v_{j},\cdots,v_{j+(k-3)},v_{1},v_{2})$. Then $v_{1}$ and $v_{j}$ compete and $v_{1}v_{j}\in E(C_{1,2}(T_{3}))$.

\hangafter 1
\hangindent 0.8em
\noindent
$\bullet$ For $i=1$ and $n-(k-3)\leq j\leq n$, $v_{1}$ dominates $v_{2}$ by the arc $(v_{3},\cdots,v_{k},v_{1},v_{2})$ and $v_{j}$ dominates $v_{2}$ by the arc $(v_{n-(k-3)},\cdots,v_{j},\cdots,v_{n},v_{1},v_{2})$. Then $v_{1}$ and $v_{j}$ compete and $v_{1}v_{j}\in E(C_{1,2}(T_{3}))$.

\hangafter 1
\hangindent 0.8em
\noindent
$\bullet$ For $i=2$ and $4\leq j\leq n-(k-3)$, $v_{2}$ dominates $v_{3}$ by the arc $(v_{n-(k-3)},\cdots,v_{n},\\v_{2},v_{3})$, $v_{3}$ dominates $v_{1}$ by the arc $(v_{3},\cdots,v_{k},v_{1},v_{2})$ and $v_{j}$ dominates $v_{1}$ by the arc $(v_{j},\cdots,v_{j+(k-3)},v_{1},v_{2})$. Then $v_{2}$ and $v_{j}$ (1,2)-step compete and $v_{2}v_{j}\in E(C_{1,2}(T_{3}))$.

\hangafter 1
\hangindent 0.8em
\noindent
$\bullet$ For $i=2$ and $n-(k-4)\leq j\leq n$, $v_{2}$ dominates $v_{3}$ by the arc $(v_{n-(k-3)},\cdots,v_{n},\\v_{2},v_{3})$, $v_{3}$ dominates $v_{1}$ by the arc $(v_{3},\cdots,v_{k},v_{1},v_{2})$ and $v_{j}$ dominates $v_{1}$ by the arc $(v_{n-(k-3)},\cdots,v_{j},\cdots,v_{n},v_{1},v_{2})$. Then $v_{2}$ and $v_{j}$ (1,2)-step compete and $v_{2}v_{j}\in E(C_{1,2}(T_{3}))$.

}

Thus $C_{1,2}(T_{3})=K_{n}-E(P_{3})$.

Now we show the ``only if" part. Let $T$ be a strong $k$-hypertournament and $G$ be the $(1,2)$-step competition graph of $T$. We show that $G$ is $K_{n}$, $K_{n}-E(P_{2})$, or $K_{n}-E(P_{3})$. We claim that $G^{c}$ contains at most two edges. Suppose to the contrary that $G^{c}$ contains at least three edges, say $e_{1},e_{2},e_{3}\in E(G^{c})$. Let $e_{i}=x_{i}y_{i}$ for $i=1,2,3$. By Lemma \ref{one}, $e_{1}$ and $e_{2}$ have a common end-point. W.l.o.g., assume that $y_{1}=x_{2}$. By Lemma \ref{one}, $e_{3}$ and $e_{1}$ have a common end-point, and $e_{3}$ and $e_{2}$ have also a common end-point. So either $e_{3}=x_{1}y_{2}$ or $x_{2}$ is an end-point of $e_{3}$. However, this implies $G^{c}$ contains 3-cycle or $K_{1,3}$, which contradicts Lemma \ref{two} and Corollary \ref{four}. So $G^{c}$ contains at most two edges. Thus, if $G^{c}$ contains two edges, Lemma \ref{one} implies $G=K_{n}-E(P_{3})$; if $G^{c}$ contains one edge, then $G=K_{n}-E(P_{2})$; if $G^{c}$ contains no edge, then $G=K_{n}$.

Therefore, the theorem holds.\end{proof}

\section{Remaining $k$-hypertournaments}

\begin{theorem} A graph $G$ on $n$ vertices is the $(1,2)$-step competition graph of some $k$-hypertournament $T$ with $3\leq k\leq n-1$ if and only if $G$ is $K_{n}$, $K_{n}-E(P_{2})$, $K_{n}-E(P_{3})$, or $K_{n-1}\cup K_{1}$.\end{theorem}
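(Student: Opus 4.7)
The plan is to mirror the structure of Theorem \ref{strong} but now allow general (not necessarily strong) $k$-hypertournaments, and to keep track of the one extra realizable graph, $K_{n-1}\cup K_1$.

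For the ``if'' direction, three of the four targets are already realized in Theorem \ref{strong}: the strong $k$-hypertournaments $T_1, T_2, T_3$ constructed there give competition graphs $K_n-E(P_2)$, $K_n$, and $K_n-E(P_3)$ respectively, and a strong $k$-hypertournament is in particular a $k$-hypertournament. It remains to realize $K_{n-1}\cup K_1$. For this I would take $T$ to be the transitive $k$-hypertournament on $v_1,\dots,v_n$ (so $v_n$ has $N^+(v_n)=\emptyset$), which is exactly the hypertournament used inside Claim~1 of Lemma \ref{three}. The computation carried out there (the four bulleted sub-cases exhibiting, for every $1\le i<j\le n-1$, a common out-neighbour of $v_i$ and $v_j$ reached in one step, together with $v_iv_n\notin E(C_{1,2}(T))$ from Lemma \ref{bothside}(a)) already shows $C_{1,2}(T)=K_{n-1}\cup K_1$; the example in Figure 1 is the $(n,k)=(5,3)$ instance. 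I would simply cite that computation rather than redo it.

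For the ``only if'' direction, let $T$ be a $k$-hypertournament with $3\le k\le n-1$ and set $G=C_{1,2}(T)$. Split on whether $G=K_{n-1}\cup K_1$. If yes, we are done. If not, then Lemma \ref{three} applies and tells us $G^c$ contains no $K_{1,3}$. Combine this with Lemma \ref{one} (no two disjoint edges in $G^c$) and Lemma \ref{two} (no triangle in $G^c$); these are exactly the three structural restrictions used in Theorem \ref{strong}. The argument there then goes through verbatim: if $G^c$ had three edges $e_1,e_2,e_3$, Lemma \ref{one} forces $e_1,e_2$ to share an endpoint, say $y_1=x_2$, and $e_3$ must share an endpoint with each of $e_1,e_2$, so either $e_3$ creates a triangle (contradicting Lemma \ref{two}) or $e_3$ is incident to $x_2$ and produces a $K_{1,3}$ at $x_2$ (contradicting Lemma \ref{three} under the assumption $G\neq K_{n-1}\cup K_1$). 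Hence $|E(G^c)|\le 2$, and then Lemma \ref{one} again forces the two missing edges (if any) to share an endpoint, giving $G\in\{K_n,K_n-E(P_2),K_n-E(P_3)\}$.

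I do not expect any step here to be a real obstacle, because all of the combinatorial work has been packaged into the preceding lemmas and into Theorem \ref{strong}. The only delicate point is the bookkeeping around the exceptional case $G=K_{n-1}\cup K_1$: one must be careful that Lemma \ref{three} is only applied after excluding this case, since that lemma's conclusion comes with exactly this caveat. The proof is therefore essentially a two-line reduction plus an appeal to the constructions of Theorem \ref{strong} and the transitive hypertournament in Lemma \ref{three}.
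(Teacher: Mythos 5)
Your proposal is correct and follows essentially the same route as the paper: the ``if'' direction is delegated to the constructions $T_1,T_2,T_3$ of Theorem \ref{strong} together with the $K_{n-1}\cup K_1$ computation inside Claim 1 of Lemma \ref{three}, and the ``only if'' direction reruns the edge-counting argument of Theorem \ref{strong} after first excluding the case $G=K_{n-1}\cup K_1$ so that Lemma \ref{three} applies alongside Lemmas \ref{one} and \ref{two}. Your explicit care about applying Lemma \ref{three} only after excluding the exceptional graph is precisely the point the paper compresses into ``similarly to the proof of Theorem \ref{strong}.''
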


\begin{proof} The ``if" part follows from the proof of Lemma \ref{three} and Theorem \ref{strong}. Now we show the ``only if" part. Let $T$ be a $k$-hypertournament and $G$ be the $(1,2)$-step competition graph of $T$. We show that $G$ is $K_{n}$, $K_{n}-E(P_{2})$, $K_{n}-E(P_{3})$, or $K_{n-1}\cup K_{1}$. Similarly to the proof of ``only if " of Theorem \ref{strong}, we get $G^{c}$ contains at most two edges unless $G=K_{n-1}\cup K_{1}$.
Thus, if $G^{c}$ contains two edges, Lemma \ref{one} implies $G=K_{n}-E(P_{3})$; if $G^{c}$ contains one edge, then $G=K_{n}-E(P_{2})$; if $G^{c}$ contains no edge, then $G=K_{n}$.

Therefore, the theorem holds.\end{proof}

\section{The $(i,j)$-step competition graph of a $k$-hypertournament}

We generalize the $(1,2)$-step competition graph to the $(i,j)$-step competition graph as follows. By the definition of the $(i,j)$-step competition graph for a $k$-hypertournament $T$, we obtain that if $i\geq 1$, $j\geq 2$, then $E(C_{1,2}(T))\subseteq E(C_{i,j}(T))$. The proof of Lemma \ref{bothside} implies the following corollary.

\begin{corollary}\label{ikstep} Let $T$ be a $k$-hypertournament with $n$ vertices satisfying $3\leq k\leq n-1$ and $i\geq 1$, $j\geq 2$ be integers. Then $xy\notin E(C_{i,j}(T))$ if and only if one of the following holds:

(a) $N^{+}(x)=\emptyset$;

(b) $N^{+}(y)=\emptyset$;

(c) $N^{+}(x)=\{y\}$;

(d) $N^{+}(y)=\{x\}$;

(e) $A_{T}^{*}\{x,y\}$ contains exactly an arc $a$, and $N_{T-a}^{+}(x)\subseteq \{y\}$, $N_{T-a}^{+}(y)\subseteq \{x\}$.\end{corollary}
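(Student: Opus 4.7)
The plan is to treat the two directions separately, exploiting the inclusion $E(C_{1,2}(T))\subseteq E(C_{i,j}(T))$ noted just above the statement, together with the case analysis already developed in Lemma~\ref{bothside}. For the ``only if'' direction, I would argue contrapositively: if $xy\notin E(C_{i,j}(T))$, the inclusion forces $xy\notin E(C_{1,2}(T))$, and Lemma~\ref{bothside} then yields one of $(a)$--$(e)$. This step is essentially free.

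For the ``if'' direction, I would verify case by case that each of $(a)$--$(e)$ still blocks $(i,j)$-step competition. Assume for contradiction that $x$ and $y$ $(i,j)$-step compete via some $z\neq x,y$ and arc-disjoint paths $P$ from $x$ to $z$ avoiding $y$, and $Q$ from $y$ to $z$ avoiding $x$. Cases $(a)$ and $(b)$ are immediate since one of $x,y$ has no out-neighbour, so no outgoing path can start there. In case $(c)$, $N^{+}(x)=\{y\}$ forces the second vertex of $P$ to be $y$, violating $y\notin V(P)$; case $(d)$ is symmetric. In case $(e)$, let $v_{2}$ be the second vertex of $P$; since $v_{2}\neq y$ and $N_{T-a}^{+}(x)\subseteq\{y\}$, the only arc realising $x\to v_{2}$ is $a$, so $P$ must begin with $a$. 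The symmetric argument applied to $Q$ shows that $Q$ also begins with $a$, contradicting the arc-disjointness of $P$ and $Q$.

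The point I expect to be the main obstacle is case $(e)$, where one might worry that longer admissible paths could bypass the single arc $a$. The key observation that resolves this is that the obstruction is purely local to the \emph{first} arc of each path: no matter how long $P$ and $Q$ are allowed to be, each of them must exit $x$ (resp.\ $y$) through the unique arc $a$, which they cannot share. Consequently the characterisation of Lemma~\ref{bothside} lifts verbatim to $C_{i,j}(T)$ for all $i\geq 1$ and $j\geq 2$, and the corollary follows.
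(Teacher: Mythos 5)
Your proposal is correct and takes essentially the same route as the paper, which simply asserts that the proof of Lemma~\ref{bothside} carries over: your two observations --- that the ``only if'' direction follows from the inclusion $E(C_{1,2}(T))\subseteq E(C_{i,j}(T))$ combined with Lemma~\ref{bothside}, and that the obstructions in cases $(a)$--$(e)$ concern only the first arc of each path and are therefore insensitive to the length bounds $i$ and $j$ --- are exactly the details needed to justify that assertion. In particular, your handling of case $(e)$, forcing both $P$ and $Q$ to begin with the unique arc $a$ and contradicting arc-disjointness, is the same mechanism the paper uses in the ``if'' part of Lemma~\ref{bothside}.
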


\begin{theorem} Let $T$ be a $k$-hypertournament with $n$ vertices satisfying $3\leq k\leq n-1$ and $i\geq 1$, $j\geq 2$ be integers. Then $C_{i,j}(T)=C_{1,2}(T)$.\end{theorem}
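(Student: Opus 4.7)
The plan is to establish the set equality $C_{i,j}(T)=C_{1,2}(T)$ by proving both inclusions of edge sets.

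For the inclusion $E(C_{1,2}(T))\subseteq E(C_{i,j}(T))$, I would use Definition \ref{1} directly: if $xy\in E(C_{1,2}(T))$, there exist arc-disjoint paths $P$ and $Q$ to a common vertex $z$ with $x\notin V(Q)$, $y\notin V(P)$, and (without loss of generality) $l(P)\le 1$ and $l(Q)\le 2$. Since by hypothesis $i\ge 1$ and $j\ge 2$, the very same pair of paths witnesses $xy\in E(C_{i,j}(T))$. This half is essentially the observation already made in the paragraph preceding Corollary \ref{ikstep}.

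For the reverse inclusion $E(C_{i,j}(T))\subseteq E(C_{1,2}(T))$, I would simply compare Lemma \ref{bothside} with Corollary \ref{ikstep}. Both lemmas list the same five non-edge conditions (a)--(e): either one of $x,y$ has no out-neighbour, or $N^{+}(x)=\{y\}$, or $N^{+}(y)=\{x\}$, or $A_T^{*}\{x,y\}$ reduces to a single arc $a$ with $N_{T-a}^{+}(x)\subseteq\{y\}$ and $N_{T-a}^{+}(y)\subseteq\{x\}$. Consequently, $xy\notin E(C_{i,j}(T))$ if and only if $xy\notin E(C_{1,2}(T))$, which gives equality of the edge sets. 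Taking the contrapositive on each vertex pair yields $E(C_{i,j}(T))=E(C_{1,2}(T))$, and since both graphs share the vertex set $V(T)$, they coincide.

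The only genuinely non-trivial step is Corollary \ref{ikstep} itself, but that is already stated in the excerpt (it follows from the proof of Lemma \ref{bothside} after observing that the arcs $a_1,a_2,a_3$ constructed there produce paths of length at most $2$, so relaxing the bounds to $i\ge 1$ and $j\ge 2$ cannot create any new edges). Once Corollary \ref{ikstep} is in hand, the theorem is a one-line consequence. The conceptual point worth emphasising is the following: in a $k$-hypertournament with $k\ge 3$, whenever two vertices $x,y$ have even a modest amount of out-neighbour variety (i.e.\ fail all of (a)--(e)), the density of arcs forces the existence of a length-$1$ together with a length-$\le 2$ competition witness, so lengthening the admissible paths contributes nothing further.
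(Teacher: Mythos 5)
Your proposal is correct and follows essentially the same route as the paper: the forward inclusion is the observation preceding Corollary \ref{ikstep}, and the reverse inclusion comes from matching the identical non-edge characterizations in Lemma \ref{bothside} and Corollary \ref{ikstep}. The paper phrases the second step as a proof by contradiction, but the content is the same.
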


\begin{proof} Clearly, $V(C_{i,j}(T))=V(C_{1,2}(T))=V(T)$. Since $E(C_{1,2}(T))\subseteq E(C_{i,j}(T))$, it suffices to show that $E(C_{i,j}(T))\subseteq E(C_{1,2}(T))$. Let $xy\in E(C_{i,j}(T))$. Suppose $xy\notin E(C_{1,2}(T))$. By Lemma \ref{bothside}, $x$ and $y$ must satisfy one of the cases $(a)-(e)$. This contradicts Corollary \ref{ikstep}. Thus $xy\in E(C_{1,2}(T))$ and $E(C_{i,j}(T))\subseteq E(C_{1,2}(T))$.\end{proof}

\end{document}